\begin{document}

\title[The multineighbor complex of a random graph]{The asymptotic topology of the multineighbor complex of a random graph}
\author{Wojciech Matysiak}

\author{Jan Spali\'nski}

\subjclass[2020]{Primary: 55N31; secondary: 05C80, 62R40.}

\keywords{
Random graph, random complex, neighborhood complex of a graph, multineighbor complex, homology,
topological data analysis.
}

\address{Faculty of Mathematics and Information Science,
              Warsaw University of Technology,
              Koszykowa 75,
              00-662 Warsaw,
              Poland}

\email{wojciech.matysiak@gmail.com}
\email{j.spalinski@mini.pw.edu.pl}

\newtheorem{lemma}{Lemma}
\newtheorem{prop}{Proposition}
\newtheorem{thm}{Theorem}
\newtheorem{cor}{Corollary}

\def\v#1{{\bf V}(#1)\ }
\def\e#1{{\bf E}(#1)\ }
\DeclarePairedDelimiter{\ceil}{\lceil}{\rceil}


\maketitle

\begin{abstract} We introduce the multineighbor complex of a graph, which is a simplicial complex
in which a simplex is a subset of the graph with a sufficient number of mutual neighbors. We investigate the asymptotic
homological properties of such complexes for the Erd\H os--Renyi random graphs and obtain a number of vanishing and nonvanishing
results. We use this construction to perform a topological data analysis classification of noisy synthetic point clouds obtaining
favorable accuracy as obtained by the standard methods. The case when there is a single neighbor has been studied earlier
by Mathew Kahle.
\end{abstract}

\section{Introduction}  The aim of this paper is to introduce a certain filtered collection of simplicial complexes which is associated with a (random) graph and to investigate its properties. The starting point of our research is the paper of M. Kahle \cite{K1}.

Given a graph $G$ and a positive integer $m$, a $q$-simplex of the multineighbor complex $N_m(G)$ is a collection of $q+1$ vertices in $G$ which has at least $m$ common neighbors. The case $m=1$ has been considered in \cite{K1}.

The motivation for this study comes from topological data analysis (TDA, see G. Carlsson \cite{C}, Carlsson and A. Zomorodian \cite{CZ} and H. Edelsbrunner, D. Letscher and Zomorodian \cite{ELZ}). Starting with a point cloud (formally, a finite subset of a  metric space) and a proximity parameter, a family of simplicial complexes is constructed.
This family is used to obtain homological invariants of the point cloud (persistent homology).
The most natural construction of such a complex is the Rips-Vietoris (a.k.a clique) complex.
A deficiency of this construction for data analysis is the size of the complexes involved (already the number of vertices is the number of elements of the point cloud).
Our constructions could be summarized as a parametrized version of the neighborhood complex, where
the extra parameter ($m$ - number of neighbors) reflects the density of the graph around a given simplex.

The paper is organized as follows. Section 2 presents the multineighbor complex and gives a few simple examples.
Section 3 (respectively, \S4) presents  a number of vanishing (respectively, nonvanishing) theorems for the reduced homology of multineighbor complexes of random graphs.
Section 5 describes an application of the multineighbor complex to topological data analysis. A point cloud determines a bigraded persistence diagram in a natural way (via the proximity parameter $r$ and the density parameter $m$). Using topological entropy as a classifier of a noisy point cloud in Euclidean space, our experiments show that the multineighbor complex gives greater accuracy of the random forest classifier than either the neighborhood complex, the Vietoris--Rips complex or the Alpha complex.

The collection of all possible modifications of the idea of a neighborhood complex of a graph is considered in the PhD. dissertation of C. Previte \cite{C}.


\section{The multineighbor complex of a graph}

In this section we introduce and study the multineighbor complex of a graph. By a graph $G$ we mean an undirected graph,
whose vertices will be denoted by $\v G$ and edges $\e G$.

In the definition below $m$ plays the role of a density parameter and $q$ is the dimension of the simplex.
Figure 1 below shows a graph in which each vertex has at least 3 neighbors.

\begin{figure}[h!]
\centering
\includegraphics[width=0.5\linewidth]{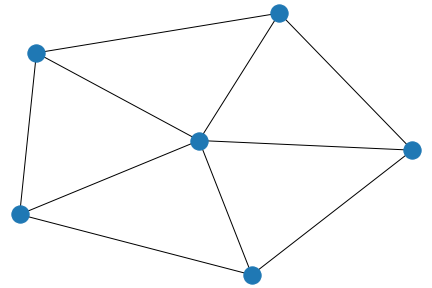}
\caption{A graph in which each vertex has at least 3 neighbors}
\end{figure}

{\bf Definition.}\label{multi_neighbor}
Let $G$ be a graph and let $m$ and $q$ be nonnegative integers.
{\sl The multineighbor complex} of the graph $G$, denoted $N_m(G)$, is obtained by declaring  a collection of $q+1$ vertices to be a $q$-simplex if these vertices have at least $m$ common neighbors in $G$.
\medskip

When $m=1$ we have the usual definition of the neighborhood complex studied in \cite{K1}, denoted  by $N(G)$. The simplest example is provided by the complete graph with $n$ vertices $K_n$, where the multineighbor complex is a skeleton of the standard simplex of dimension $n-1$.

Given a simplicial complex $L$, we let $L^{(i)}$ denote the $i$-dimensional skeleton of $L$, i.e. the set of simplices of $L$ of dimension at most $i$.

\begin{lemma}
Let $K_n$ be the complete graph on $n$ vertices. The multineighbor complex of $K_n$ is the $n-1-m$ dimensional skeleton of the standard simplex of dimension $n-1$ on the vertices of $K_n$
\[ N_m(K_n) =(\Delta^{n-1})^{(n-1-m)} \]
\end{lemma}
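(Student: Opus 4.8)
The plan is to reduce the defining condition of $N_m(K_n)$ to a statement about cardinalities. First I would fix a nonempty set $S$ of $q+1$ vertices of $K_n$ and compute its set of common neighbors. Since $K_n$ is complete and has no loops, a vertex $v$ is adjacent to every vertex of $S$ if and only if $v \notin S$; consequently $S$ has exactly $n-(q+1) = n-q-1$ common neighbors in $K_n$.

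Next I would apply the definition of the multineighbor complex: $S$ spans a $q$-simplex of $N_m(K_n)$ if and only if $n-q-1 \ge m$, equivalently $q \le n-1-m$. Because this criterion depends only on $|S|$ and not on the particular vertices, the family of simplices of $N_m(K_n)$ is precisely $\{\, S \subseteq \mathbf{V}(K_n) : |S| \le n-m \,\}$, which is by definition the $(n-1-m)$-skeleton of the standard simplex $\Delta^{n-1}$ on the $n$ vertices of $K_n$. This is the asserted identity. (One should also note that $N_m(K_n)$ really is a simplicial complex, but this is immediate: a common neighbor of $S$ is a common neighbor of every subset of $S$, so the collection of simplices is closed under passing to faces.)

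As sanity checks I would verify the extremes: for $m=0$ every subset is a simplex and the formula gives $N_0(K_n) = \Delta^{n-1}$; for $m=1$ it gives the $(n-2)$-skeleton, recovering the neighborhood complex $N(K_n)$ of \cite{K1}; and for $m \ge n$ the right-hand side is read as the void complex (or the $(-1)$-dimensional complex $\{\emptyset\}$ when $m=n$), matching the fact that no vertex of $K_n$ has as many as $n$ neighbors. There is no real obstacle in this argument; the only subtlety is the loopless convention that makes the common-neighbor count $n-q-1$ rather than $n-q$, and keeping track of it is exactly what pins down the dimension $n-1-m$ of the skeleton.
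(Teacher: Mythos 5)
Your argument is correct and is essentially the paper's proof: both count the common neighbors of a $(q+1)$-set in $K_n$ (exactly $n-q-1$, since the only non-neighbors are the vertices of the set itself) and read off the condition $q+1+m\le n$, i.e.\ $q\le n-1-m$. Your write-up is in fact slightly more explicit than the paper's, which treats $m=0$ and $m=1$ first and then states the general criterion; no gaps.
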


\begin{proof} We describe the first two cases explicitly. If $m=0$, every collection of $q+1$ vertices in $K_n$
spans a simplex, hence the multineighbor complex is just $\Delta^{n-1}$, this simplicial complex of all nonempty subsets of the  $n$-element set of vertices of $K_n$. Next, if $m=1$, a $q+1$-tuple will have a neighbor in $K_n$ exactly when $q+1<n$, or $q< n-1$. Hence for $m=1$ we obtain the $n-2$ skeleton of $\Delta^{n-1}$. In general, because the graph is complete and has $n$ vertices, a $(q+1)$-tuple will have a neighbor if
$q+1+m\le n$. The result follows.
\end{proof}

Figures 2  and 3 show the 1-skeletons of the multineighbor complexes of two simple graphs, where the number of neighbors is $m=1$ and $m=2$.

\begin{figure}[h!]
\centering
\includegraphics[width=1.1\linewidth]{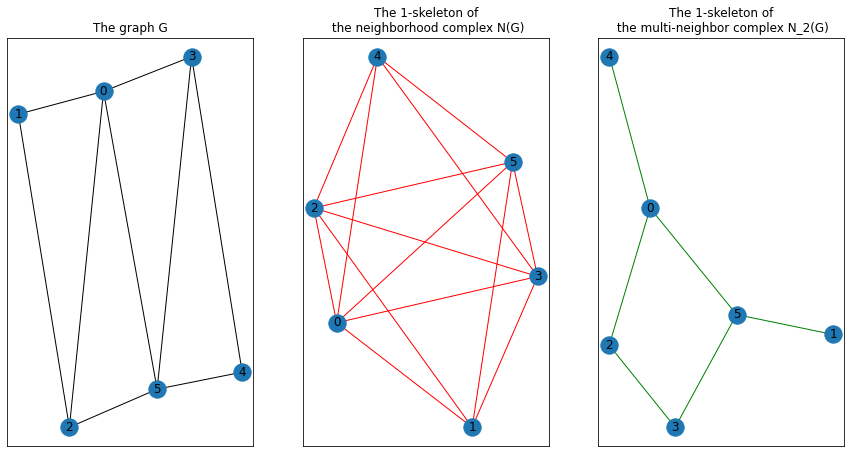}
\caption{A graph and the 1-skeletons of the multineighbor complexes with $m=1$ and $m=2$}
\end{figure}

\begin{figure}[h!]
\centering
\includegraphics[width=1.1\linewidth]{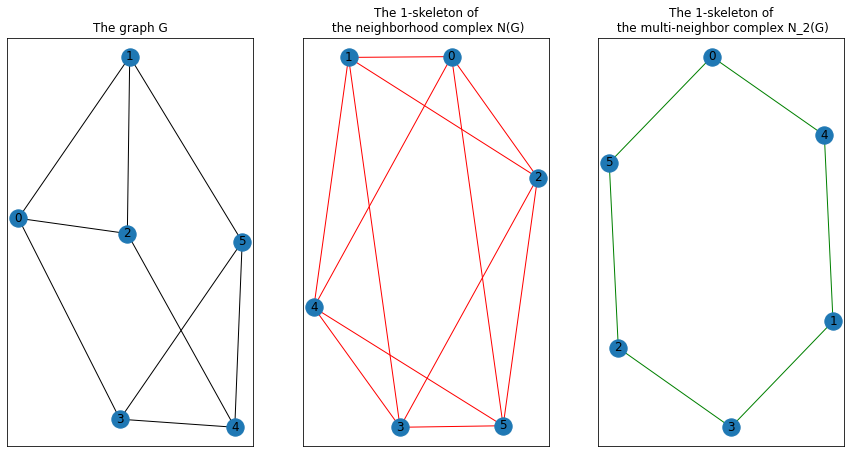}
\caption{A graph and the 1-skeletons of the multineighbor complexes with $m=1$ and $m=2$}
\end{figure}

\hfill

\eject

\section{Vanishing theorems for reduced homology}

We will need the following result of A. Bjorner (which is a consequence of D. Quillen's Order homotopy theorem).
Recall that given a partially ordered set $P$, the order complex $\Delta(P)$ is a simplicial complex whose vertices are the elements of $P$ and whose
$q$ simplices are the $q+1$-element chains of elements of $P$: $p_0<p_1<\dots<p_q$. Given a simplicial complex $L$, we let $P(L)$ be the poset of its
faces ordered by inclusion. One can check that $\Delta(P(L))$ is the barycentric subdivision of $L$, and hence the two simplicial complexes are homeomorphic.
The operations $P(\cdot)$ and $\Delta(\cdot)$ allow us to think of simplicial complexes and partially ordered sets as essentially equivalent as models of topological spaces. In particular, when we consider a topological notion for a partially ordered set $P$, this means that we are considering that notion for the underlying space of the simplicial complex $\Delta(P)$.

\medskip

\noindent{\bf Corrolary (A. Bjorner, page 26, Cor 11.8)}: {\it Let $P$ be a partially ordered set and $f:P\to P$ be an order-preserving map such that
\[f(x)\ge x\qquad\textrm{for all}\qquad x \in P\]
Then $f$ induces a homotopy equivalence between $P$ and $f(P)$.

If, moreover, $f^2(x)=f(x)$  for all $x\in P$, then $f(P)$ is a strong deformation retract of $P$.
}
\medskip

On a number of occasions we will use the following theorem, which is the multineighbor analogue of Kahle's
Lemma 2.3 in \cite{K1}.

\begin{thm}\label{main_theorem} Let $H$ be a graph, $a$ and $b$ positive integers, and $m$ an integer such that $1\le m\le a+b-2$. If $H$ does not contain a complete bipartite graph $K_{a,b}$, then $||N_m(H)||$ strong deformation retracts to a complex of dimension at most $a+b-m-2$.
\end{thm}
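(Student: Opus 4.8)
The plan is to follow the pattern of Kahle's Lemma~2.3: first replace $||N_m(H)||$, up to strong deformation retraction, by the order complex of a poset of ``closed'' faces, and then bound the lengths of chains in that poset using the fact that $H$ omits $K_{a,b}$ (hence also $K_{b,a}$).

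First I would record the Galois connection underlying the construction. For a set $S$ of vertices of $H$ write $c(S)$ for its set of common neighbors; then $c$ is order-reversing, $S\subseteq c(c(S))$, and $c\circ c\circ c=c$. A nonempty vertex set $\sigma$ is a simplex of $N_m(H)$ exactly when $|c(\sigma)|\ge m$. Let $P=P(N_m(H))$ be the face poset and define $F(\sigma)=c(c(\sigma))$ on $P$. Then $F$ is order-preserving, $F(\sigma)\supseteq\sigma$, $F\circ F=F$, and $F(\sigma)$ is again a simplex of $N_m(H)$ because $c(F(\sigma))=c(\sigma)$ has at least $m$ elements. The second part of the corollary of Bj\"orner quoted above therefore applies with $f=F$, giving that $||N_m(H)||$, which is homeomorphic to $||\Delta(P)||$, strong deformation retracts onto $||\Delta(\mathcal F)||$, where $\mathcal F:=F(P)$ is the poset of closed faces $\sigma=c(c(\sigma))$ ordered by inclusion. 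It then suffices to prove $\dim\Delta(\mathcal F)\le a+b-m-2$, i.e.\ that no chain in $\mathcal F$ has more than $a+b-m-1$ elements.

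The combinatorial core is the following chain estimate. Let $\sigma_0\subsetneq\sigma_1\subsetneq\cdots\subsetneq\sigma_k$ be a chain of closed faces. Since each $\sigma_i=c(c(\sigma_i))$, the sets $c(\sigma_0)\supsetneq c(\sigma_1)\supsetneq\cdots\supsetneq c(\sigma_k)$ decrease \emph{strictly}. Put $s_i=|\sigma_i|$ and $t_i=|c(\sigma_i)|$, so $(s_i)$ increases strictly with $s_0\ge 1$ and $(t_i)$ decreases strictly with $t_k\ge m$ (the face condition). Every vertex of $\sigma_i$ is adjacent to every vertex of $c(\sigma_i)$, and the two sets are disjoint, so $H$ contains $K_{s_i,t_i}$; since $H$ omits $K_{a,b}$ we cannot have $\min(s_i,t_i)\ge\min(a,b)$ and $\max(s_i,t_i)\ge\max(a,b)$ simultaneously. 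Let $i^{*}$ be the least index with $t_{i^{*}}<\max(a,b)$, and set $i^{*}=k+1$ if there is none. For $i<i^{*}$ one has $t_i\ge\max(a,b)$, which forces $s_i\le\min(a,b)-1$; as the $s_i$ are distinct positive integers this gives $i^{*}\le\min(a,b)-1$. For $i\ge i^{*}$ the $t_i$ are distinct integers lying in $[m,\max(a,b)-1]$, hence $k-i^{*}+1\le\max(a,b)-m$. Adding, $k+1\le\min(a,b)-1+\max(a,b)-m=a+b-m-1$, as required.

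The step I expect to be the real obstacle is the degenerate case $i^{*}=k+1$, where $t_i\ge\max(a,b)$ for every $i$ in the chain: there the estimate above only yields $k\le\min(a,b)-2$, and $\min(a,b)-2\le a+b-m-2$ holds precisely when $m\le\max(a,b)$. This already covers Kahle's case $m=1$ and every $m$ up to $\max(a,b)$. For the remaining range $\max(a,b)<m\le a+b-2$ (nonempty only when $\min(a,b)\ge 3$) one observes that $|c(\sigma)|\ge m>\max(a,b)$ forces $|\sigma|\le\min(a,b)-1$ for \emph{every} face, so $\dim N_m(H)\le\min(a,b)-2$ to begin with; the task is then to shave this down to $a+b-m-2$, e.g.\ by iterating the closure/Bj\"orner reduction on $\mathcal F$, or by exhibiting a strong deformation retraction of $||N_m(H)||$ onto an appropriate skeleton directly. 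Making that last reduction precise is the delicate point; everything else is the bookkeeping above.
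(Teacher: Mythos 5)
Your reduction is exactly the one the paper uses: the closure operator $\Gamma^2$ (your $c\circ c$), Bj\"orner's Corollary 11.8 applied to the idempotent, increasing, order-preserving map on the face poset, and then a bound on chain lengths in the poset of closed faces. Your index-splitting estimate (splitting the chain at the first place where $|c(\sigma_i)|$ drops below $\max\{a,b\}$) is an equivalent, slightly cleaner form of the paper's Case~1, where one simply picks the single index $i=b-m$ after assuming $m\le b$ without loss of generality. So for $m\le\max\{a,b\}$ your argument is complete and essentially identical to the paper's.

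The case you flag as unresolved, $\max\{a,b\}<m\le a+b-2$, is the paper's Case~2, and your hesitation is well founded. The paper disposes of it by asserting that both $X_{q+1}$ and $Y_{q+1}=\Gamma(X_{q+1})$ have at least $m$ vertices ``as they lie in $N_m(H)$''; but membership of $X_{q+1}$ in $N_m(H)$ only bounds $|Y_{q+1}|$, not $|X_{q+1}|$, and closed faces with fewer than $m$ vertices occur easily. Concretely, with $a=b=3$, $m=4$, let $H$ have vertices $u,v,w_1,\dots,w_5$ with $u$ adjacent to all $w_i$ and $v$ adjacent to $w_1,\dots,w_4$: then $H$ is $K_{3,3}$-free, yet $\{u\}\subsetneq\{u,v\}$ is a chain of two closed faces of $N_4(H)$, so the closure retract has dimension $1>a+b-m-2=0$ and no chain-length argument can give the stated bound. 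In fact the statement itself fails in this range: take core vertices $x_1,\dots,x_6$ (no edges among them) and, for each consecutive pair $\{x_i,x_{i+1}\}$ of the $6$-cycle, four private vertices adjacent exactly to $x_i$ and $x_{i+1}$. No three vertices of this graph have three common neighbors, so it contains no $K_{3,3}$; but $N_4(H)$ is precisely the $6$-cycle, so $\|N_4(H)\|\simeq S^1$, which does not strong deformation retract onto any $0$-dimensional complex. So the ``delicate point'' you left open cannot be repaired in the stated generality (and the paper's Case~2 does not repair it); what survives for $m>\max\{a,b\}$ is only the weaker bound $\min\{a,b\}-2$ coming from your observation that every face then has at most $\min\{a,b\}-1$ vertices, while the asserted bound $a+b-m-2$ is correct (by your argument, which is the paper's Case~1) exactly when $m\le\max\{a,b\}$.
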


\begin{proof} We will prove the contrapositive. Hence assume that $||N_m(H)||$ does not deformation retract
to a complex of dimension at most $a+b-m-2$, and hence every deformation retract must have dimension at least $a+b-m-1$.

For a vertex $x\in \v H$, let $\Gamma(x)$ be the set of neighbors of $x$. If $X$ is a subset of $\v H$, we define
\[  \Gamma(X)  = \bigcap_{x\in X} \Gamma(x)  \]
It is easy to see that $\Gamma$ is an order reversing map from the set of all subsets of $\v H$ to itself.
It follows that $\Gamma^2$ is an order preserving map. Here is a simple example. Consider the graph $H$ displayed in Figure 4.

\begin{figure}[h!]
\centering
\includegraphics[width=0.6\linewidth]{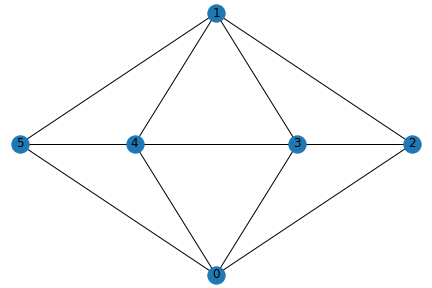}
\caption{The graph $H$ used in the description of the $\Gamma$ construction}
\end{figure}

We have $\Gamma(\{0\})=\{2,3,4,5\}$ and $\Gamma(\{2,3,4,5\})=\{0,1\}$.

Let $X$ be an arbitrary subset of $\v H$. Since every element of $X$ is connected to each element of $\Gamma(X)$, we conclude $X\subset \Gamma^2(X)$. Since $\Gamma$ reverses inclusions, we also have
$\Gamma^3(X)\subset \Gamma(X)$. Setting $X=\Gamma(Y)$ in the earlier formula, we obtain $\Gamma(Y)\subset \Gamma^3(Y)$.

We conclude $\Gamma^3(X)=\Gamma(X)$ for all $X \subset \v H$.

Let $P=P(N_m(H))$ denote the partially ordered set of simplices of $N_n(H)$, which in turn are certain subsets of $\v H$.

Let $v:P\to P$ be the map defined by the formula
\[   v(X) = \Gamma^2(X) \cap N_m(H)  \]
Below, we show that $v^2=v$.
We begin by showing that $v(X) \subset v^2(X)$.

Let $Y=v(X)$. We have
\[  Y \subset N_m(H) \qquad\textrm{and}\qquad Y\subset \Gamma^2(Y)  \]
Hence
\[  Y \subset N_m(H) \cap \Gamma^2(Y) = v(Y) \]
Recalling that $Y=v(X)$, we have
\[ v(X) \subset v(v(X)) = v^2(X) \]

Next, we prove the reverse inclusion, i.e. $v^2(X) \subset v(X)$.

To begin, note that $v$ is monotonic, that is
\[    X\subset Y \subset N_m(H) \implies v(X) \subset v(Y)            \]
This follows since we know that for $X\subset Y$, $\Gamma^2(X) \subset \Gamma^2(Y)$, and so
\[  v(X) =  \Gamma^2(X) \cap N_m(H) \subset \Gamma^2(Y) \cap N_m(H) = v(Y)\]

Hence we have,

\begin{align*}
 v^2(X) &= \Gamma^2\left(\Gamma^2(X)\cap N_m(H)\right) \cap N_m(H) \\
       &\subset \Gamma^2\left(\Gamma^2(X)\right) \cap N_m(H)  \\
       &=\Gamma^2(X)\cap N_m(H) = v(X)
\end{align*}

This finishes the proof that $v^2(X) = v(X)$.

By Corollary 11.8 in Bjorner \cite{Bj}, $v(P)$ is a strong deformation retract of $P$.

By the assumption made at the beginning of the proof, $v(P)$ must have dimension at least $a+b-m-1$.
Hence it must contain at least one simplex of that dimension, say of dimension $q$,
with $q\ge a+b-m-1$.

Such a simplex in $v(P)$ is a strictly increasing sequence of simplices in $N_m(H)$ of the form:

\[  X_1 \subsetneq X_2 \subsetneq \dots \subsetneq X_{q+1}  \qquad (*) \]

Since $X_1$ is nonempty and the inclusions are strict, $X_i$ contains at least $i$ vertices of $1\le i\le q+1$.

Letting $Y_i= \Gamma(X_i)$, we have

\[  Y_1 \supsetneq Y_2 \supsetneq \dots \supsetneq Y_{q+1}  \qquad (**) \]

Since $\Gamma^3 = \Gamma$, the inclusions immediately above are strict. Here we need to recall that
$(*)$ is a simplex in $v(P)$, hence each $X_i$ is of the form $X_i= \Gamma^2(U_i)$ for some $U_i$ in $P$.

Since $X_{q+1}$ is a simplex in $N_m(H)$, there are at least $m$  vertices in $H$ outside of $X_{q+1}$ connected to each vertex of $X_{q+1}$.

Since the sequence $(**)$ is  strict, and $Y_{q+1}$ contains at least $m$ elements, it follows that
$Y_{q+1-i}$ contains at least $m+i$ vertices.

Now there are two cases to consider:

\begin{enumerate}
\item $m\le \max\{a,b\}$,
\item $m > \max\{a,b\}$.
\end{enumerate}

{\bf Case 1.} Without loss of generality, assume $m\le b$.

Setting $i=b-m$, we see that $Y_{q+1-i}$ has at least $m+(b-m) = b$ vertices.

Recall from our choice of $q$, that we must have
\[   q+1-i \ge  a + b - m -1 + 1 - (b-m) = a           \]

Hence $X_{q+1-i}$ has at least $a$ vertices which along with the vertices of $Y_{q+1-i}$ span
a complete bipartite graph in $H$. This contradicts the assumptions made in the statement of the theorem.

\medskip
{\bf Case 2.} We now have $m>a$ and $m>b$, and every vertex of $X_{q+1}$ is connected to every vertex of  $Y_{q+1}$,
and each of these has at least $m$ vertices (as they lie in $N_m(H)$). Hence the vertices of $X_{q+1}$ and   $Y_{q+1}$
span a complete bipartite subgraph of $H$, with the partite subsets of cardinality greater than $a$ and $b$. This again
contradicts the assumptions made in the statement of the theorem.

\end{proof}

Let $G(n,p)$ be a random graph with $n$ vertices and each edge appearing independently of others with probability $p$ (i.e.
this is an Erd\H os--R\'enyi random graph \cite{ER}). 
Recall that the statement that a sequence $a_n$ is ${\bf o}(1)$ means that it has limit zero as $n$ approaches infinity. We will say that an object $X$ asymptotically almost surely has property $P$, abberivated a.a.s., if $\lim_{n\to\infty} \Bbb P(X\in P)=1$.

\begin{lemma} Let $m$ be a fixed positive integer, and  $p=p(n)$ be a monotone sequence with values in the interval $[0,1]$. If
\[  {n\choose i} \left(1-p^{mi}\right)^{{n-i}\choose m}  = {\bf o}(1) \]
then $N_m(G(n,p))$ is a.a.s. $(i-2)$-connected.
\end{lemma}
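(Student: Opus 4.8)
\textbf{The plan} is to follow the strategy used by Kahle in the case $m=1$: reduce the topological conclusion to a combinatorial statement about common neighbourhoods, and then control the relevant failure probability by a first--moment (union bound) computation.

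\textbf{Step 1 (topology to combinatorics).} First I would record the elementary fact that if every $i$-element subset of $\v G$ spans a simplex of $N_m(G)$, then $N_m(G)$ is $(i-2)$-connected. Under this hypothesis the $(i-1)$-skeleton of $N_m(G)$ is exactly that of the full simplex $\Delta^{n-1}$ on $\v G$: if an $i$-set has at least $m$ common neighbours, then so does each of its subsets, so all faces of dimension below $i-1$ are present and the vertex set of $N_m(G)$ is all of $\v G$, while $N_m(G)\subseteq\Delta^{n-1}$. Since $(\Delta^{n-1})^{(i-1)}$ is obtained from the contractible complex $\Delta^{n-1}$ by deleting cells of dimension $\ge i$, it has the same $\pi_0,\dots,\pi_{i-2}$ as $\Delta^{n-1}$, namely trivial; and those homotopy groups of a complex are determined by its $(i-1)$-skeleton, so $N_m(G)$ is $(i-2)$-connected. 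Hence it suffices to show that $G(n,p)$ a.a.s. has the property that every $i$-subset of its vertices has at least $m$ common neighbours.

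\textbf{Step 2 (first moment).} Let $A_S$ be the event that a fixed $i$-set $S$ has fewer than $m$ common neighbours in $G(n,p)$. By the union bound,
\[ \Bbb P\bigl(N_m(G(n,p))\text{ is not }(i-2)\text{-connected}\bigr)\le\binom{n}{i}\,\Bbb P(A_{S_0}) \]
for any fixed $i$-set $S_0$, so the lemma reduces to the estimate $\Bbb P(A_{S_0})\le\bigl(1-p^{mi}\bigr)^{\binom{n-i}{m}}$, after which the hypothesis $\binom{n}{i}(1-p^{mi})^{\binom{n-i}{m}}={\bf o}(1)$ completes the argument. To see where the claimed bound comes from, observe that $S_0$ fails to span a simplex precisely when, for \emph{every} $m$-element subset $T$ of the $n-i$ remaining vertices, at least one of the $mi$ potential edges between $S_0$ and $T$ is missing; a fixed such $T$ is joined completely to $S_0$ with probability $p^{mi}$, and there are $\binom{n-i}{m}$ choices of $T$.

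\textbf{The hard part} will be making Step 2 rigorous, because the $\binom{n-i}{m}$ events ``$T$ is a common-neighbour witness for $S_0$'' overlap: they all involve the edges incident to $S_0$, so they are positively correlated and $\Bbb P(A_{S_0})$ is not literally the product of the independent factors $1-p^{mi}$. Getting the per-$i$-set probability into the stated form therefore requires handling this dependence with care — rather than the reduction of Step 1, which is soft — and this is where essentially all the work of the proof lies.
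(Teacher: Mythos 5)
Your Steps 1 and 2 reproduce the paper's argument exactly: the paper also reduces $(i-2)$-connectivity to $i$-neighborliness and then takes a union bound over the $\binom{n}{i}$ vertex subsets. The gap is that you stop at the single estimate that carries all of the content, namely $\mathbb{P}(A_{S_0})\le (1-p^{mi})^{\binom{n-i}{m}}$, and your suspicion about the dependence is not a technicality to be "handled with care" --- it is fatal to that estimate for $m\ge 2$. The events $B_T=\{T\text{ completely joined to }S_0\}$ are increasing in the edge indicators, so their complements are decreasing, and the Harris/FKG inequality gives $\mathbb{P}(A_{S_0})=\mathbb{P}\bigl(\bigcap_T B_T^c\bigr)\ \ge\ \prod_T\mathbb{P}(B_T^c)=(1-p^{mi})^{\binom{n-i}{m}}$: the product is a \emph{lower} bound, not an upper bound. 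Concretely, with $i=1$, $m=2$, $n=4$, $p=\tfrac12$, the failure probability for a fixed vertex is $\mathbb{P}(\mathrm{Bin}(3,\tfrac12)\le 1)=\tfrac12$, whereas $(1-p^{2})^{\binom{3}{2}}=(3/4)^3=27/64<\tfrac12$. Only for $m=1$ (Kahle's case) is the claimed expression correct, because then the witness sets are single vertices and the relevant edge sets are disjoint.

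For comparison, the paper's own proof simply asserts that the per-set failure probability \emph{equals} $(1-p^{mi})^{\binom{n-i}{m}}$, i.e.\ it multiplies over the $\binom{n-i}{m}$ witness sets as if they were independent --- precisely the step you declined to write down; so your proposal is no less complete than the printed argument, but neither of them proves the lemma in the stated form. The independence that can legitimately be exploited is vertex-wise, not $m$-set-wise: distinct candidate common neighbors of $S_0$ involve disjoint sets of potential edges, so the number of common neighbors of a fixed $i$-set is $\mathrm{Bin}(n-i,p^{i})$, whence $\mathbb{P}(A_{S_0})=\mathbb{P}\bigl(\mathrm{Bin}(n-i,p^{i})\le m-1\bigr)\le \binom{n-i}{m-1}(1-p^{i})^{\,n-i-m+1}$ (fewer than $m$ successes forces some $n-i-m+1$ trials to all fail). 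This yields a correct per-set bound, but of a different shape from the hypothesis of the lemma, so to salvage the statement you would either replace the hypothesis by $\binom{n}{i}\binom{n-i}{m-1}(1-p^{i})^{\,n-i-m+1}=\mathbf{o}(1)$ or re-verify that the corrected quantity is still $\mathbf{o}(1)$ in each later application of the lemma.
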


 \begin{proof} A simplicial complex  is $i$-neighborly if every set of $i$ vertices spans a face; this property implies that the simplex is $(i-2)$-connected. The probability that a given set of $i$ vertices in $G(n,p)$ has a given set of $m$ vertices as neighbors has probability $p^{mi}$. Hence the probability of the opposite is $1 - p^{mi}$. Hence the probability that a given set of $i$ vertices does not have a set of $m$ neighbors in
$G(n,p)$ is equal to $\left(1-p^{mi}\right)^{{n-i}\choose m}$. Hence, finally, the probability that no set of $i$ vertices has $m$ neighbors is bounded from above by
\[ {n\choose i} \left(1-p^{mi}\right)^{{n-i}\choose m}  \]
\end{proof}

\begin{prop} If $p=\frac{1}{2}$ and $\varepsilon>0$, then a.a.s. $H_{l-2}(N_mG(n,p))=0$ for
$l\le \left(1-\frac{\varepsilon}{m}\right) \log_2 n$.
\end{prop}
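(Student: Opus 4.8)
\section*{Proof proposal}

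The plan is to deduce this from the preceding Lemma by taking the neighborliness parameter $i$ as large as the value $p=\tfrac12$ allows. Since the section deals with reduced homology, read $H_{l-2}$ as reduced homology. Put $l_0 = \lfloor (1-\varepsilon/m)\log_2 n\rfloor$. It suffices to prove that $N_m(G(n,\tfrac12))$ is a.a.s.\ $(l_0-2)$-connected: $(l_0-2)$-connectivity forces $\widetilde H_{l-2}=0$ simultaneously for every $l\le l_0$ (for $l\ge 2$ by Hurewicz, and trivially for $l\le 1$), so the single worst case $l=l_0$ already yields the whole range and no union bound over $l$ is needed. The constant sequence $p\equiv\tfrac12$ is (trivially) monotone, so the preceding Lemma applies with $i=l_0$, and the whole statement reduces to checking that
\[
\binom{n}{l_0}\bigl(1 - 2^{-m l_0}\bigr)^{\binom{n-l_0}{m}} = {\bf o}(1).
\]

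To estimate the left-hand side I would first use $1-x\le e^{-x}$ to bound the bracketed factor by $\exp\bigl(-2^{-ml_0}\binom{n-l_0}{m}\bigr)$. Because $l_0 = O(\log n) = {\bf o}(n)$ we have $n-l_0\sim n$ and hence $\binom{n-l_0}{m} = \bigl(1+{\bf o}(1)\bigr)\,\tfrac{n^m}{m!}$, uniformly. Next, from $l_0 \le (1-\varepsilon/m)\log_2 n$ we get $m l_0 \le (m-\varepsilon)\log_2 n$, so $2^{m l_0}\le n^{m-\varepsilon}$ and therefore $2^{-m l_0}\ge n^{\varepsilon-m}$; combining,
\[
2^{-m l_0}\binom{n-l_0}{m} \ \ge\ \bigl(1+{\bf o}(1)\bigr)\frac{n^{\varepsilon}}{m!}.
\]
On the other hand $\binom{n}{l_0}\le n^{l_0} = \exp(l_0\ln n) = \exp\bigl(O((\log n)^2)\bigr)$, since $l_0\ln n \le (1-\varepsilon/m)(\log_2 n)(\ln n)$. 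Multiplying the two estimates gives
\[
\binom{n}{l_0}\bigl(1-2^{-m l_0}\bigr)^{\binom{n-l_0}{m}} \ \le\ \exp\!\Bigl(O\bigl((\log n)^2\bigr) - \bigl(1+{\bf o}(1)\bigr)\frac{n^{\varepsilon}}{m!}\Bigr) \ \longrightarrow\ 0,
\]
because the $n^{\varepsilon}$ term dominates. This verifies the hypothesis of the Lemma, which delivers a.a.s.\ $(l_0-2)$-connectivity, hence the claimed vanishing.

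I do not anticipate a genuine obstacle; the two points needing a little care are the reduction to the single value $l_0$ (using that connectivity is monotone in the degree, so no sum over $l$ is required) and the uniformity of $\binom{n-l_0}{m}\sim n^m/m!$, which is immediate since $l_0$ grows only logarithmically. The essential mechanism is that the stretched-exponential decay $\exp(-\Theta(n^{\varepsilon}))$ of the ``bad event'' probability for one $l_0$-subset overwhelms the merely sub-exponential count $\exp(O((\log n)^2))$ of such subsets, and this holds precisely while $l$ stays below $(1-\varepsilon/m)\log_2 n$: the factor $\varepsilon/m$ is exactly what produces the surplus power $n^{\varepsilon}$.
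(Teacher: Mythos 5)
Your proposal is correct and follows essentially the same argument as the paper's proof: apply the preceding Lemma, bound $(1-2^{-ml})^{\binom{n-l}{m}}\le\exp\bigl(-2^{-ml}\binom{n-l}{m}\bigr)$, use $l\le(1-\varepsilon/m)\log_2 n$ to get $2^{-ml}\ge n^{\varepsilon-m}$ and $\binom{n-l}{m}=\Theta(n^m)$, so the surplus $n^{\varepsilon}$ in the exponent overwhelms the $\exp(O((\log n)^2))$ coming from $\binom{n}{l}\le n^{l}$. The only (harmless) cosmetic difference is that you check the single extreme value $l_0$ and invoke monotonicity of connectivity, whereas the paper bounds the expression for a general $l$ in the range.
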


\begin{proof} In what follows, we will use the following well-known inequalities:

\begin{align*}
  1-x &\le \exp(-x) , \qquad x\in \mathbb R \\
 \frac{n^k}{k^k} &\le {n \choose k}  ,\qquad  1 \le k \le n
\end{align*}

By the above lemma, it is enough to show that
\[  {n\choose l} \left(1-p^{ml}\right)^{{n-l}\choose m}  = {\bf o}(1) \]

We have
\begin{align*}
{n\choose l} \left(1-p^{ml}\right)^{{n-l}\choose m}
        &\le n^l \exp\left(-\left(\frac{1}{2}\right)^{ml}{{n-l}\choose m}\right)\\
        &\le n^l \exp\left(-\frac{1}{2^{(m-\varepsilon)\log_2 n}}{{n-l}\choose m}\right)\\
        &\le n^l \exp\left(-\frac{1}{n^{(m-\varepsilon)}}{{n-l}\choose m}\right)\\
        &\le n^l \exp\left(-{n^{(-m+\varepsilon)}}\cdot {\frac{(n-l)^m}{m^m}}\right)\\
        &= \exp\left(l\ln(n) -\left(\frac{1}{m^m}\right){n^{(-m+\varepsilon)}}{{(n-l)^m}}\right)\\
        &= \exp\left(l\ln(n) -\left(\frac{1}{m^m}\right)
                 \left[{n^{\varepsilon}} - m\,l\, n^{\varepsilon-1} + \dots\right]\right)\\
        &={\bf o}(1)
\end{align*}
The last equality following from the fact that $\lim_{x\to \infty} \frac{x^{\varepsilon}}{\ln x} =\infty$.

\end{proof}

\begin{prop} If $p=\frac{1}{2}$ and $\varepsilon>0$, then a.a.s. $H_{l-m+1}(N_m G(n,p))=0$ for
$l\ge \left(4+\varepsilon\right) \log_2 n$.
\end{prop}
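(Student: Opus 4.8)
The plan is to derive the statement from Theorem \ref{main_theorem} together with a first moment estimate showing that, with $p=\frac12$, the graph $G(n,p)$ a.a.s.\ contains no large complete bipartite subgraph. Put
\[
s = s(n) = \ceil{2\log_2 n} + 1, \qquad a = b = s .
\]
Since $m$ is fixed while $a+b = 2s \to \infty$, the hypothesis $1 \le m \le a+b-2$ of Theorem \ref{main_theorem} holds for all large $n$, so it is enough to prove that a.a.s.\ $G(n,p)$ contains no copy of $K_{s,s}$. Granting this, Theorem \ref{main_theorem} shows that a.a.s.\ $||N_m(G(n,p))||$ strong deformation retracts to a complex of dimension at most $a+b-m-2 = 2s-m-2$, whence $H_k(N_m G(n,p))=0$ a.a.s.\ for every $k > 2s-m-2$; the final step will be to check that $l-m+1$ lies in this range.

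For the absence of $K_{s,s}$ I would apply Markov's inequality to the number $X$ of ordered pairs $(A,B)$ of disjoint $s$-element vertex sets all of whose $s^2$ joining edges are present. Then
\[
\mathbb{E}[X] \le \binom{n}{s}^2 \Big(\frac12\Big)^{s^2} \le n^{2s}\, 2^{-s^2} = 2^{\,2s\log_2 n - s^2} = 2^{-s\,(s-2\log_2 n)} \le 2^{-s} \le n^{-2},
\]
using $\binom{n}{s}\le n^{s}$ and $s-2\log_2 n \ge 1$. Hence $\mathbb{P}(X\ge 1)\le\mathbb{E}[X]={\bf o}(1)$, so a.a.s.\ $G(n,p)$ has no $K_{s,s}$, and the reduction of the previous paragraph applies.

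Finally, I would verify that the homological degree $l-m+1$ exceeds the dimension bound, i.e.\ $l-m+1 > 2s-m-2$, equivalently $l \ge 2s-2$, whenever $l \ge (4+\varepsilon)\log_2 n$ and $n$ is large. Indeed $2s = 2\ceil{2\log_2 n} + 2 \le 4\log_2 n + 4$, so $2s-2 \le 4\log_2 n + 2 \le (4+\varepsilon)\log_2 n \le l$ as soon as $\varepsilon\log_2 n \ge 2$, that is for all sufficiently large $n$. Thus a.a.s.\ $N_m G(n,p)$ deformation retracts to a complex of dimension strictly below $l-m+1$, which yields the asserted vanishing. The one genuinely load-bearing choice is $a=b=s$ with $s$ just above $2\log_2 n$: this is exactly the threshold at which the expected number of copies of $K_{s,s}$ in $G(n,p)$ drops below $1$, and it is why the constant $4 = 2+2$ appears in the hypothesis $l\ge(4+\varepsilon)\log_2 n$. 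If anything requires care it is only coordinating the ceiling and the $+1$ of slack so that $\mathbb{E}[X]={\bf o}(1)$ and $2s-m-2 < l-m+1$ hold simultaneously for the whole range of $l$; replacing $s$ by $\ceil{(2+\delta)\log_2 n}$ for a small $\delta>0$ leaves room to spare at both ends.
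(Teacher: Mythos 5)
Your proof is correct and follows essentially the same route as the paper: a first-moment (union) bound showing that $G(n,\tfrac12)$ a.a.s.\ contains no complete bipartite subgraph with both parts of size roughly $2\log_2 n$, followed by Theorem \ref{main_theorem} to deformation retract $N_m(G(n,p))$ onto a complex of dimension below $l-m+1$. The only difference is cosmetic: the paper takes $a=b=\lceil l/2\rceil$ (tied to $l$), while you take $a=b$ just above $2\log_2 n$, tied only to $n$.
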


\begin{proof} Set $j=k=\ceil[\bigg]{\displaystyle\frac{l}{2}}$. The probability of finding a $(j,k)$-bipartite subgraph in $G(n,p)$ is bounded above by
\[ {n \choose j} {n \choose k} \left(\frac{1}{2}\right)^j \left(\frac{1}{2}\right)^k \]

We will show that the above expression is ${\bf o}(1)$ for the given choice of $l$ (and hence $j$ and $k$).

\begin{align*}
	{n \choose j} {n \choose k} \left(\frac{1}{2}\right)^j \left(\frac{1}{2}\right)^k
	                  &\le n^j n^k \frac{1}{2^{jk}} \\
	                  &\le n^{j+k} \frac{1}{2^{jk}} \\
                      &\le n^{l+2} \frac{1}{2^{jk}} \\
                      &\le n^{l+2} \frac{1}{2^{\frac{l^2}{4}}} \\
                      &\le n^{l+2} \frac{1}{(n^{4+\varepsilon})^{\frac{l}{4}}} \\
                      &\le n^{l+2} \frac{1}{(n^{l+\varepsilon l/4})} \\
                      &\le n^{2- \frac{\varepsilon}{4} l} = {\bf o}(1)
\end{align*}

By Theorem \ref{main_theorem}, $N_m(G(n,p))$ deformation retracts to a complex of dimension at most
\[   j+k - m - 2 \le \frac{l}{2} + 1 + \frac{l}{2} + 1 - m - 2 = l - m   \]

Hence the homology of $N_m(G(n,p))$ vanishes in dimensions $l-m+1$ and above.

\end{proof}

\begin{prop} Let $l$ be a nonnegative integer and $p=n^{\alpha}$, where $\alpha\in \left[\displaystyle\frac{-1}{l+2},0\right]$. Then a.a.s. $H_l(N_m(G(n,p)))=0$.
\end{prop}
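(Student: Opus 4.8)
The goal is to show that when $p = n^\alpha$ with $\alpha \in [-1/(l+2), 0]$, the top-dimensional homology $H_l(N_m(G(n,p)))$ vanishes a.a.s. The natural tool is Lemma~2, which gives $(i-2)$-connectivity (hence vanishing of $H_{i-2}$ and below) whenever ${n \choose i}(1-p^{mi})^{{n-i}\choose m} = \mathbf{o}(1)$. To kill $H_l$ it therefore suffices to take $i = l+2$ and verify that
\[
{n \choose l+2}\left(1-p^{m(l+2)}\right)^{{n-l-2}\choose m} = \mathbf{o}(1).
\]
So the plan is: first reduce to this single estimate via Lemma~2, then bound it using the two elementary inequalities already invoked in the previous proposition ($1-x \le e^{-x}$ and ${n\choose k} \ge n^k/k^k$, plus ${n\choose k}\le n^k$).

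**Carrying out the estimate.** Using $1-x \le e^{-x}$ and ${n\choose l+2} \le n^{l+2}$, the expression is at most
\[
n^{l+2}\exp\!\left(-p^{m(l+2)}{n-l-2 \choose m}\right).
\]
Now substitute $p = n^\alpha$, so $p^{m(l+2)} = n^{\alpha m (l+2)}$. The hypothesis $\alpha \ge -1/(l+2)$ gives $\alpha(l+2) \ge -1$, hence $\alpha m(l+2) \ge -m$, so $p^{m(l+2)} \ge n^{-m}$. Meanwhile ${n-l-2 \choose m} \ge (n-l-2)^m/m^m$, which grows like $n^m/m^m$. Therefore the exponent satisfies
\[
p^{m(l+2)}{n-l-2 \choose m} \ge \frac{n^{-m}(n-l-2)^m}{m^m} = \frac{1}{m^m}\left(1 - \frac{l+2}{n}\right)^m \to \frac{1}{m^m} > 0,
\]
so this lower bound tends to a positive constant $c = 1/m^m$ — it does \emph{not} blow up. This is the crux of the difficulty: unlike in the previous proposition where the exponent diverged, here the best one gets from the endpoint $\alpha = -1/(l+2)$ is that $n^{l+2}\exp(-p^{m(l+2)}{n-l-2\choose m})$ is bounded below by something like $n^{l+2}e^{-c}$, which is \emph{not} $\mathbf{o}(1)$. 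So a naive application of Lemma~2 with $i=l+2$ fails at the endpoint, and the main obstacle is handling the boundary case $\alpha = -1/(l+2)$ (and $\alpha$ near it).

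**Resolving the obstacle.** There are a couple of routes. The cleanest is to observe that $p(n)$ must be a monotone sequence with values in $[0,1]$, and to exploit that the conclusion is only about $G(n,p)$ for a genuine sequence; one can replace $p = n^\alpha$ by the slightly larger $p = n^\alpha \cdot \omega(n)$ for a sufficiently slowly growing $\omega(n) \to \infty$ (e.g. $\omega(n) = (\ln n)^{1/(m(l+2))}$ when $\alpha$ is exactly the endpoint), which makes $p^{m(l+2)}{n-l-2 \choose m}$ grow like $\ln n$ times a constant, enough to beat $n^{l+2}$ — wait, that still only gives $n^{l+2} \exp(-c'\ln n \cdot n^{?})$... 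Actually the correct fix is simpler: for $\alpha$ strictly greater than $-1/(l+2)$, write $\alpha m(l+2) = -m + \delta$ with $\delta > 0$, so $p^{m(l+2)}{n-l-2\choose m} \ge \frac{1}{m^m}n^{\delta}(1-\frac{l+2}{n})^m$, which grows polynomially and dominates $n^{l+2}$; thus $\mathbf{o}(1)$ follows exactly as in Proposition~2. For the endpoint $\alpha = -1/(l+2)$ one uses monotonicity: $H_l$ vanishing is a monotone event in the sense that if $N_m(G(n,p))$ is $l$-neighborly then so is $N_m(G(n,p'))$ for $p' \ge p$ — since more edges only add common neighbors — hence it suffices to prove the result for $p$ slightly above the endpoint, where the previous argument applies, and then note that the set of $p$ for which the conclusion holds is upward closed. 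I would present the argument by first treating $\alpha > -1/(l+2)$ directly via the chain of inequalities above, then closing the endpoint by this monotonicity remark; the endpoint case is the step I expect to require the most care.
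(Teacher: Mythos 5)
Your reduction via Lemma~2 with $i=l+2$, and your estimate for $\alpha$ strictly greater than $-1/(l+2)$ (writing $m\alpha(l+2)=-m+\delta$ with $\delta>0$, so that $p^{m(l+2)}\binom{n-l-2}{m}\ge c\,n^{\delta}$ dominates $(l+2)\ln n$), are exactly the paper's argument and are fine. You are also right that the bound is \emph{not} $\mathbf{o}(1)$ at the endpoint $\alpha=-1/(l+2)$: there the exponent tends to the constant $-1/m^m$, so the displayed quantity is of order $n^{l+2}$. (The paper's own chain of inequalities asserts $\mathbf{o}(1)$ at this endpoint, which its last line does not actually deliver, so you have spotted a real defect in the published proof.)

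The genuine gap is your proposed repair of the endpoint. The property ``every $(l+2)$-set of vertices has at least $m$ common neighbors'' is increasing in the edge set, so its probability is nondecreasing in $p$; but $\alpha=-1/(l+2)$ gives the \emph{smallest} $p$ in the admissible range. Knowing the conclusion for $p$ slightly above the endpoint, and that the set of good $p$ is upward closed, covers only larger values of $p$ --- which your direct estimate had already handled --- and says nothing about the endpoint itself; the monotonicity runs in the wrong direction for what you need. Moreover the endpoint cannot be rescued inside the neighborliness framework at all: when $p^{l+2}=n^{-1}$, the number of common neighbors of a fixed $(l+2)$-set is binomial with mean about $1$ (asymptotically Poisson), so each such set fails to be a face with probability bounded away from $0$, and by considering disjoint $(l+2)$-sets with common neighbors sought outside their union one gets that $N_m(G(n,p))$ is a.a.s.\ \emph{not} $(l+2)$-neighborly there. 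So at $\alpha=-1/(l+2)$ one needs either a genuinely different argument for $H_l=0$ or a restriction to the open-ended range $\alpha>-1/(l+2)$; as written, your proof (like the paper's) establishes the proposition only away from that endpoint. The abandoned digression about replacing $p$ by $n^{\alpha}\omega(n)$ should simply be deleted, and ``$l$-neighborly'' should read ``$(l+2)$-neighborly''.
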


\begin{proof} The proof consists of finding an upper bound for the probability of finding an $l+2$ simplex
in $N_m(G(n,p))$. We have

\begin{multline*}
	{n \choose l+2}  \left(1 - p^{m(l+2)}\right)^{n-(l+2) \choose m}
	            \le n^{l+2} \exp\left(-p^{m(l+2)} {n-(l+2) \choose m} \right) \\
	            \le n^{l+2} \exp\left(-n^{m\alpha (l+2)} {n-(l+2) \choose m} \right) \\
	            \le n^{l+2} \exp\left(-n^{m\alpha (l+2)} \,  \frac{(n-(l+2))^m}{m^m} \right) \\
	            = n^{l+2} \exp\left(-n^{m\alpha (l+2)} \, (n-(l+2))^m \frac{1}{m^m} \right) \\
	            = n^{l+2} \exp\left(-n^{m\alpha (l+2)} \, \left(n^m-m(l+2)n^{m-1} +\dots \right)\frac{1}{m^m} \right) \\
	            = n^{l+2} \exp\left(\left(-n^{m(\alpha (l+2)+1)} + m(l+2)n^{m(\alpha (l+2))} +\dots \right)\frac{1}{m^m} \right) ={\bf o}(1)\\
\end{multline*}

By the Lemma above,  $N_m(G(n,p))$ is a.a.s. $l$-connected, hence its reduced homology in dimension $l$ vanishes.

\end{proof}

\begin{prop} Let $l$ and $m$ be a positive integers and $p=n^{\alpha}$, where

\begin{enumerate}
\item[(a)] for $l$ even $\alpha\in \left[-2,\displaystyle\frac{-4}{l+2}\right]$ ,
\item[(b)] for $l$ odd $\alpha\in \left[-2,\displaystyle\frac{-4(l+2)}{(l+1)(l+3)}\right]$ ,
\end{enumerate}
Then a.a.s. $\tilde H_l (N_m(G(n,p))) = 0$.
\end{prop}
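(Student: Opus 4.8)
\medskip
\noindent\textbf{Proof proposal.}\ Here the range of exponents lies strictly to the left of the one in the previous proposition, and the neighbourliness estimate used there is no longer of help: the quantity in the connectivity lemma above fails to be $\mathbf{o}(1)$ once $\alpha\le-\tfrac1{l+2}$. Instead the plan is to use Theorem~\ref{main_theorem}. Fix positive integers $a,b$ with $1\le m\le a+b-2$ and $a+b-m-2\le l-1$, and argue in three steps: (i) $G(n,p)$ is a.a.s.\ $K_{a,b}$-free; (ii) hence, by Theorem~\ref{main_theorem}, $\|N_m(G(n,p))\|$ a.a.s.\ strong deformation retracts onto a complex of dimension at most $a+b-m-2\le l-1$; (iii) therefore $\tilde H_l(N_m(G(n,p)))=0$ a.a.s.

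Everything rests on choosing $a,b$ which satisfy these constraints and for which step (i) holds throughout the stated interval of $\alpha$. For $1\le m\le l$ the right choice is $a=b=\tfrac{l+2}{2}$ when $l$ is even and $a=\tfrac{l+1}{2}$, $b=\tfrac{l+3}{2}$ when $l$ is odd: in each case $a+b=l+2$, so $a+b-2=l\ge m$ and $a+b-m-2=l-m\le l-1$, exactly as required, and $\tfrac1a+\tfrac1b$ equals $\tfrac{4}{l+2}$ in the even case and $\tfrac{4(l+2)}{(l+1)(l+3)}$ in the odd case --- precisely the right endpoints appearing in (a) and (b). Step (i) is a first-moment estimate: the expected number of copies of $K_{a,b}$ in $G(n,p)$ is at most
\[
\binom{n}{a}\binom{n}{b}\,p^{ab}\ \le\ n^{a+b}\,n^{\alpha ab}\ =\ n^{\,a+b+\alpha ab},
\]
which is $\mathbf{o}(1)$ as soon as $\alpha<-\bigl(\tfrac1a+\tfrac1b\bigr)$; the left endpoint $\alpha=-2$ is comfortably inside, since $a+b=l+2\ge3$ forces $\tfrac1a+\tfrac1b\le\tfrac32<2$. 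Markov's inequality then yields step (i), and Theorem~\ref{main_theorem} completes the argument. In the complementary range $m>l$ one takes instead a balanced pair with $a+b$ of order $m$ that still satisfies $1\le m\le a+b-2$ and $a+b-m-2\le l-1$; the same bound then holds on an even wider range of $\alpha$ (and $N_m(G(n,p))$ is a.a.s.\ of very small dimension, often empty), so the stated interval is again covered.

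The one delicate point, which I expect to be the main obstacle, is the right endpoint $\alpha=-\bigl(\tfrac1a+\tfrac1b\bigr)$ itself, where the displayed bound only gives $\binom na\binom nbp^{ab}=O(1)$: the number of copies of $K_{a,b}$ is then of constant order (asymptotically Poisson-distributed with positive mean), so $G(n,p)$ is \emph{not} a.a.s.\ $K_{a,b}$-free and Theorem~\ref{main_theorem} cannot be invoked directly. For $m\ge 2$ this is easily sidestepped: passing to $K_{(l+2)/2,\,(l+4)/2}$ (or its odd-$l$ analogue) keeps $a+b-m-2\le l-1$ while making $a+b+\alpha ab<0$ strictly at the endpoint, so the first moment there is genuinely $\mathbf{o}(1)$. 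The residual case is $m=1$ at the endpoint; there one must argue by hand that the boundedly many copies of $K_{a,b}$ present in $G(n,p)$ cannot generate $l$-dimensional reduced homology in $N_1(G(n,p))$ --- for instance by re-examining the retraction $v=\Gamma^2(\,\cdot\,)\cap N_m$ from the proof of Theorem~\ref{main_theorem}, showing that only $O(1)$ simplices of dimension $a+b-m-1$ survive in $v(P)$ and that they bound --- or else one simply takes this boundary value out of the interval. Everything else is a routine first-moment calculation together with a direct appeal to Theorem~\ref{main_theorem}.
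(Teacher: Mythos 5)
Your proposal is correct and follows essentially the same route as the paper: the paper's entire proof is a one-line appeal to the argument of Kahle's Corollary 2.5 --- a first-moment bound showing $G(n,p)$ is a.a.s.\ free of a balanced $K_{a,b}$ with $a+b=l+2$ --- combined with Theorem \ref{main_theorem}, which is exactly what you reconstruct. The boundary subtlety you flag at $\alpha=-\left(\tfrac1a+\tfrac1b\right)$ (and your fixes for $m\ge 2$ and $m>l$) go beyond what the paper itself records, since its citation-style proof does not address the closed right endpoint either.
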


\begin{proof}  We apply the argument given in the proof of Corollary 2.5 in Kahle \cite{K1} on page 384, and apply our Theorem \ref{main_theorem} in order to obtain the result.
\end{proof}

\section{Non-vanishing theorems for reduced homology}

We now turn to establishing that certain homology groups of the multineighbor complex do not vanish in the asymptotic sense for certain random graphs $G(n,p)$ under the appropriate conditions on $p$. It is usual to refer to a subgraph which is a complete graph as a clique.

{\bf Definition.} Let $k$ and $m$ be positive integers. The $X_{k,m}$ graph is the graph with the
vertex set
\[   \{u_1,\dots,u_k\} \cup \{v_{1,l},\dots,v_{k,l}\}_{l=1,\dots,m}       \]
such that  $\{u_1,\dots,u_k\}$ spans a clique and $u_i$ is adjacent to $v_{j,l}$ for $i\neq j$ and $l=1,\dots,m$.
\medskip

The $X_{3,2}$ graph is displayed in Figure 5.

\begin{figure}[h!]
\centering
\includegraphics[width=0.8\linewidth]{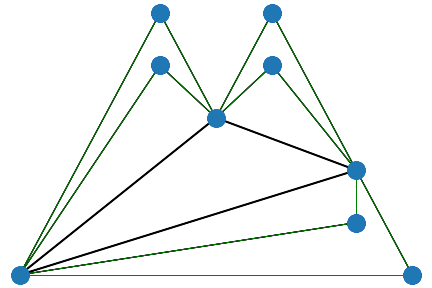}
\caption{The $X_{3,2}$ graph}
\end{figure}

\begin{prop} Let $m$ and $k$ be integers with $m\ge 1$ and $k\ge 3$. If the graph $H$ has a maximal clique which cannot be extended to an $X_{k,m}$ subgraph, then $||N_m(H)||$ retracts to the sphere
$\mathbb S^{k-2}$
\end{prop}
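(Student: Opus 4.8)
The plan is to follow Kahle's argument for the case $m=1$ in \cite{K1} --- where an ``unsaturated'' maximal clique of a graph is shown to carve out a spherical retract of its neighborhood complex --- and to run it through the $\Gamma^2$ strong deformation retraction that is built inside the proof of Theorem~\ref{main_theorem}. Write $C=\{u_1,\dots,u_k\}$ for the maximal clique supplied by the hypothesis, let $W$ be the set of remaining vertices of $H$, and let $\Sigma$ be the subcomplex of $N_m(H)$ consisting of those simplices whose vertices all lie in $C$.

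The first step is to recognise the relevant sphere. Since $C$ is a \emph{maximal} clique, no vertex of $H$ is adjacent to every vertex of $C$, so $\Gamma(C)=\varnothing$; hence $C$ has no common neighbour and is not a simplex of $N_m(H)$, so the potential top cell on these $k$ vertices is absent. Next I would check that every \emph{proper} subset $\tau\subsetneq C$ is a simplex, which identifies $\Sigma$ with $\partial\Delta^{k-1}\cong\mathbb{S}^{k-2}$. For $m=1$ this is automatic, any vertex of $C\smallsetminus\tau$ being a common neighbour of $\tau$; for larger $m$ it is exactly here that the hypothesis enters --- maximality of $C$ makes the sets $\Gamma(C\smallsetminus\{u_i\})\smallsetminus C$ pairwise disjoint, so ``$C$ cannot be extended to an $X_{k,m}$'' becomes a constraint on their sizes, which is the combinatorial input one needs.

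The second step is to retract $||N_m(H)||$ onto $\Sigma$. I would first invoke Bjorner's Corollary~11.8 \cite{Bj} exactly as in the proof of Theorem~\ref{main_theorem}: the map $v(X)=\Gamma^2(X)\cap N_m(H)$ satisfies $v=v^2$ and $v(X)\supseteq X$, so $v(P)$ is a strong deformation retract of the face poset $P=P(N_m(H))$, and hence $||N_m(H)||$ strong deformation retracts onto $||\Delta(v(P))||$. Now $v(P)$ consists of the $\Gamma$-closed simplices, those of the form $\Gamma(T)$, and on these $\Gamma$ restricts to an order-reversing involution; I would sort them according to whether or not they meet $W$ and then collapse $\Delta(v(P))$ onto the part supported on $C$ (which carries the copy of $\mathbb{S}^{k-2}$), deleting the $\Gamma$-closed simplices that meet $W$ one vertex of $W$ at a time, via a discrete Morse / elementary collapse argument or yet another application of Bjorner's corollary to a suitable order-preserving map. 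Composing the two retractions gives $||N_m(H)||\to\Sigma\cong\mathbb{S}^{k-2}$, and in particular $\tilde H_{k-2}(N_m(H))\neq 0$.

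The step I expect to be the main obstacle is the collapse in the second step. The difficulty is that a vertex of $W$ may occur in a $\Gamma$-closed simplex whose entire set of common neighbours also lies in $W$, and such simplices are not removed by the naive simplicial map that sends each $w\in W$ to some $u_i$ with $w\not\sim u_i$ (one can write down a face that this map carries onto all of $C$ while staying compatible with the hypothesis). The purpose of ``$C$ does not extend to an $X_{k,m}$'' --- equivalently, that some $(k-1)$-subset of $C$ is short of common neighbours outside $C$ --- is precisely to rule out these configurations, and making that quantitative is the heart of the matter: one natural route is to order the vertices of $W$ so that at every stage the link of the vertex being deleted is contractible, handling by hand the finitely many vertices lying close to $C$ with the help of the hypothesis.
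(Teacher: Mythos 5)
Your write-up is a plan rather than a proof, and the step it leaves open is precisely the content of the proposition. The paper's proof is simply Kahle's argument from \cite{K1} rerun with $X_{k,m}$ in place of his graph $X_k$: one exhibits the boundary of the simplex on the maximal clique $C$ inside the complex and then constructs an explicit retraction of $\|N_m(H)\|$ onto it, the non-extendability of $C$ being what makes the retraction well defined. You correctly identify the target sphere, but for the retraction you substitute a two-stage scheme: first Bj\"orner's Corollary 11.8 applied to $v(X)=\Gamma^2(X)\cap N_m(H)$ (this part is fine and is the same device as in Theorem \ref{main_theorem}), and then a vertex-by-vertex collapse of the $\Gamma$-closed simplices meeting the set $W$ of vertices outside $C$. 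You yourself concede that this collapse is the ``main obstacle,'' that the naive assignment $w\mapsto u_i$ with $w\not\sim u_i$ fails, and that you would have to ``handle by hand'' the problematic vertices ``with the help of the hypothesis.'' But turning ``$C$ extends to no $X_{k,m}$'' into exactly such a retraction/collapse is the whole theorem; since that step is not carried out, nothing is proved. The paper's (admittedly terse) route is the one to follow: repeat Kahle's retraction argument verbatim with $X_{k,m}$.

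There is also a directional error in your first step. Put $A_j=\Gamma(C\setminus\{u_j\})\setminus C$. You rightly note that maximality of $C$ makes the $A_j$ pairwise disjoint, but then non-extendability to $X_{k,m}$ says precisely that $|A_{j_0}|\le m-1$ for \emph{some} $j_0$: it is an upper bound. What you need in order to have $\partial\Delta^{k-1}\subseteq N_m(H)$ is the lower bound $|A_j|\ge m-1$ for \emph{every} $j$, because the facet $C\setminus\{u_j\}$ gets only the single common neighbour $u_j$ from the clique and must find $m-1$ more outside it. So the hypothesis does not supply ``the combinatorial input one needs'' for your step 1; for $m\ge 2$ nothing can, as $H=K_k$ shows: its maximal clique extends to no $X_{k,m}$, yet $N_2(K_k)=(\Delta^{k-1})^{(k-3)}$ has dimension $k-3$ and therefore admits no retraction onto (indeed contains no copy of) $\mathbb S^{k-2}$. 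For $m=1$ the facets are present automatically, as you note; for $m\ge 2$ a complete argument must add the requirement that each facet of the clique has at least $m$ common neighbours (equivalently $|A_j|\ge m-1$ for all $j$), which is what holds a.a.s.\ in the random-graph applications, and only then build the retraction.
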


\begin{proof} The proof consists of the observation that the argument given in \cite{K1} may be repeated here once one replaces the graph $X_n$ considered there by the graph $X_{k,m}$.
\end{proof}

Of course the applications of this result will require a different argument, as one will need to determine the absence of a different type of subgraph.

\begin{prop}\label{main_prop} Let $p=\frac{1}{2}$, $\varepsilon>0$,  and $k$ be an integer satisfying the inequalities
\[     \left( \frac{2m+2}{2m+1} +\varepsilon\right) \log_2 n < k < (2-\varepsilon) \log_2 n             \]
Then a.a.s.
\[    \tilde H_k(N_m(G(n,p))) \neq 0       \]
\end{prop}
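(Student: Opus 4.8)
The plan is to follow Kahle's strategy for the non-vanishing result in \cite{K1}, adapting it to the multineighbor setting via the $X_{k,m}$ graph introduced above. The previous proposition tells us that if a random graph $H = G(n,p)$ contains a maximal clique that cannot be extended to an $X_{k,m}$ subgraph, then $\|N_m(H)\|$ retracts onto a sphere $\mathbb{S}^{k-2}$, forcing $\tilde H_{k-2}(N_m(H)) \neq 0$. (Here one should be careful about the index shift: to get a nonzero class in dimension $k$ one applies the proposition with the parameter $k+2$ in place of $k$, so throughout I will work with cliques of size $k+2$.) The first step is therefore to reduce the proposition to a statement about the existence, a.a.s., of a clique of size $k+2$ in $G(n,1/2)$ that is \emph{isolated} in the appropriate sense—namely, one which is maximal and which admits no extension to an $X_{k+2,m}$ subgraph.

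Second, I would estimate the expected number of such cliques by a first-moment/second-moment argument. The expected number of $(k+2)$-cliques in $G(n,1/2)$ is $\binom{n}{k+2} 2^{-\binom{k+2}{2}}$; the upper bound $k < (2-\varepsilon)\log_2 n$ is exactly the regime (after the index shift, $k+2 < (2-\varepsilon')\log_2 n$ for a slightly smaller $\varepsilon'$, still valid) in which this quantity tends to infinity, so cliques of this size exist in abundance. The lower bound $k > \left(\frac{2m+2}{2m+1}+\varepsilon\right)\log_2 n$ is the condition ensuring that a \emph{typical} such clique is, a.a.s., \emph{not} extendable to an $X_{k+2,m}$ subgraph: given a fixed $(k+2)$-clique $C$, the probability that some particular way of attaching the auxiliary vertices $v_{j,l}$ yields an $X_{k+2,m}$ on $C$ involves a product of the form $2^{-(k+1)m}$ per auxiliary vertex (each $v_{j,l}$ must be adjacent to the $k+1$ clique vertices $u_i$ with $i \neq j$), and there are at most $(k+2)m$ such auxiliary vertices chosen from roughly $n$ candidates; a union bound over the choices gives a probability bounded by something like $n^{(k+2)m} 2^{-(k+2)(k+1)m}$ of the clique being extendable, and the stated lower bound on $k$ is precisely what makes the contribution from extendable cliques negligible compared to the total count. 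One then checks that a.a.s. at least one $(k+2)$-clique is simultaneously maximal and non-extendable; maximality is handled by the standard observation that in $G(n,1/2)$ most cliques near the clique number are maximal (the probability that a vertex outside $C$ is adjacent to all of $C$ is $2^{-(k+2)}$, and one wants this small relative to the relevant counts).

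Third, having secured such a clique a.a.s., I would invoke the previous proposition to conclude $\|N_m(G(n,p))\|$ retracts onto $\mathbb{S}^k$ (with the index shift), and hence $\tilde H_k(N_m(G(n,p))) \neq 0$ a.a.s. The main obstacle, as usual in these $G(n,1/2)$ clique-number arguments, is controlling the variance: a pure first-moment argument shows the expected number of suitable cliques is large, but one needs a second-moment estimate (à la Bollobás–Erdős on the clique number, as used by Kahle) to deduce that at least one actually occurs a.a.s.; the overlapping-clique correlation terms must be shown not to dominate. A secondary technical point is making the two $\varepsilon$-windows compatible after the $k \mapsto k+2$ shift and after absorbing lower-order terms, but this is routine bookkeeping: any $\varepsilon > 0$ in the statement can be met by choosing the internal $\varepsilon$'s slightly smaller. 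The genuinely delicate inequality is the lower bound $k > \left(\frac{2m+2}{2m+1}+\varepsilon\right)\log_2 n$, whose origin is the comparison $n^{(k+2)m} 2^{-(k+2)(k+1)m}$ versus the $(k+2)$-clique count $\binom{n}{k+2}2^{-\binom{k+2}{2}}$—tracking the exponents of $n$ on both sides and solving for $k$ yields exactly this threshold, and I would present that exponent computation carefully as the heart of the proof.
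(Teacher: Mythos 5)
Your proposal is correct and follows essentially the same route as the paper: reduce, via the preceding proposition, to finding a maximal clique that cannot be extended to an $X$-graph, and rule out extendability by a first-moment union bound whose exponent computation (expected number of $X_{k,m}$ copies at most $n^{(m+1)k}2^{-(2m+1)k(k-1)/2}=o(1)$) is exactly where the threshold $\left(\tfrac{2m+2}{2m+1}+\varepsilon\right)\log_2 n$ comes from. The only difference is that the paper dispatches what you call the main obstacle by citing Bollob\'as for the a.a.s.\ existence of maximal cliques of every order between $(1+\varepsilon)\log_2 n$ and $(2-\varepsilon)\log_2 n$, so no second-moment argument needs to be carried out (and your explicit $k\mapsto k+2$ index shift is in fact more careful bookkeeping than the paper's).
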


\begin{proof}  It follows from B. Bollobas \cite{Bl}, that $G(n,p)$ contains maximal cliques of every order $k$ in the interval
\[   (1+\varepsilon) \log_2 n < k < (2-\varepsilon) \log_2 n    \]
It remains to show that under our assumptions there are a.a.s. no $X_{k,m}$ subgraphs.

The probability of finding an $X_{k,m}$ subgraph is bounded from above by the following:

\begin{align*}
	[(m+1)k]! &{n \choose (m+1)k}  \left(\frac{1}{2}\right)^{\frac{(2m+1)k(k-1)}{2}} \\
	            &\le n^{(m+1)k} \left(\frac{1}{2}\right)^{\frac{(2m+1)k(k-1)}{2}}\\
	            & = n^{(m+1)k} 2^{-\frac{(2m+1)k(k-1)}{2}}\\
	            & = n^{(m+1)k} n^{\log_n 2[-\frac{(2m+1)k(k-1)}{2}]}\\
	            & = n^{(m+1)k} n^{[-\frac{(2m+1)k(k-1)}{2\log_2 n}]}\\
	            & = n^{[\frac{2(m+1)k \log_2 n - (2m+1)k(k-1)}{2\log_2 n}]}\\
	            & = n^{k[\frac{2(m+1) \log_2 n - (2m+1)(k-1)}{2\log_2 n}]}\\
& \le n^{k[\frac{2(m+1) \log_2 n - (2m+1)\left(\left[\frac{2m+2}{2m+1}+\varepsilon\right]\log_2 n-1\right)}{2\log_2 n}]}\\
& = n^{k[\frac{- (2m+1)\varepsilon\log_2 n +(2m+1)}{2\log_2 n}]}\\
\end{align*}

Hence the exponent of $n$ is of the order of
\[       \frac{- (2m+1)k\varepsilon  }{2}      \]
and hence the bound is {\bf o}(1) as $n$ goes to infinity.

\end{proof}

Recall that the density of a graph with $v$ vertices and $e$ edges is defined as $\lambda = \displaystyle\frac{e}{v}$. A graph is balanced if its density is greater or equal to that of each
of its subgraphs.

\begin{lemma} The graph $X_{k,m}$ is balanced and has density given by
\[            \lambda = \frac{(2m+1)(k-1)}{2(m+1)}              \]
\end{lemma}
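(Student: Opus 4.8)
The plan is to compute the density of $X_{k,m}$ directly and then verify balancedness by checking that no subgraph has strictly larger density. First I would count the vertices and edges of $X_{k,m}$. By definition the vertex set is $\{u_1,\dots,u_k\}$ together with the $mk$ vertices $v_{j,l}$, so $v = k + mk = (m+1)k$. For the edges: the clique on $\{u_1,\dots,u_k\}$ contributes $\binom{k}{2} = \frac{k(k-1)}{2}$ edges, and each $u_i$ is adjacent to $v_{j,l}$ exactly when $i\neq j$, so for each fixed pair $(j,l)$ there are $k-1$ such edges, giving $m k (k-1)$ edges of the second type. Hence $e = \frac{k(k-1)}{2} + mk(k-1) = \frac{(2m+1)k(k-1)}{2}$, and therefore
\[
\lambda = \frac{e}{v} = \frac{(2m+1)k(k-1)/2}{(m+1)k} = \frac{(2m+1)(k-1)}{2(m+1)},
\]
which is the claimed formula.

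For balancedness I would argue that it suffices to consider induced subgraphs, since removing edges only lowers density, and among induced subgraphs the densest one on a given vertex count is obtained by retaining as many $u_i$'s as possible (they carry all the clique edges and the bulk of the cross edges). More precisely, suppose a subgraph $H'$ retains $a$ of the $u$-vertices and $b$ of the $v$-vertices; then $H'$ has at most $\binom{a}{2} + (a-1)b$ edges (each retained $v_{j,l}$ has at most $a-1$ neighbors among the retained $u$'s, and could have fewer if $u_j$ itself is retained) on $a+b$ vertices. So the density of any subgraph is at most $g(a,b) := \frac{\binom{a}{2} + (a-1)b}{a+b}$, and I would show $g(a,b) \le \lambda = g(k,mk)$ for all $0\le a\le k$, $0\le b\le mk$ (with $a+b\ge 1$).

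The key step is analyzing $g(a,b)$. For fixed $a$, the function $b\mapsto \frac{\binom{a}{2}+(a-1)b}{a+b}$ is monotone in $b$: its derivative has the sign of $(a-1)(a+b) - (\binom{a}{2}+(a-1)b) = (a-1)a - \binom{a}{2} = \binom{a}{2} \ge 0$, so $g$ is nondecreasing in $b$, and the maximum over $b\le mk$ is at $b = mk$, giving $g(a,mk) = \frac{\binom{a}{2}+(a-1)mk}{a+mk}$. Then I would check that $a\mapsto g(a,mk)$ is nondecreasing for $1\le a\le k$ — again a single-variable monotonicity computation, where the numerator grows like $a^2/2 + mka$ and the denominator like $a+mk$, so the quotient increases — so its maximum is at $a = k$, yielding exactly $\lambda$. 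This establishes that every subgraph has density at most $\lambda$, i.e. $X_{k,m}$ is balanced.

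The main obstacle is handling the edge-count bound for subgraphs cleanly: one must be careful that dropping a $u$-vertex removes $a-1$ clique edges plus up to $b$ cross edges, and that the worst case (maximizing density per vertex) genuinely is "keep all $u$'s, then all $v$'s." I expect this reduces to the two monotonicity claims above, each of which is a routine sign computation on a rational function, so once the reduction to $g(a,b)$ is justified the rest is mechanical. An alternative, perhaps cleaner, route would be to invoke a standard criterion (e.g. a graph all of whose vertices have degree at least $2\lambda$ in suitable subgraphs is balanced, or a direct LP-type argument), but the elementary two-variable optimization of $g(a,b)$ is self-contained and I would present that.
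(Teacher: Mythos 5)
Your computation of the density is correct and matches the paper, but the balancedness half has a genuine gap: the edge bound $e(H')\le\binom{a}{2}+(a-1)b$ is false. A retained vertex $v_{j,l}$ is adjacent to $u_i$ for all $i\neq j$, so if $u_j$ is \emph{not} among the $a$ retained $u$-vertices, then $v_{j,l}$ has $a$ (not $a-1$) neighbors among them; your parenthetical has the two cases backwards. Concretely, the induced subgraph on $\{u_1,\dots,u_{k-1}\}\cup\{v_{k,1},\dots,v_{k,m}\}$ has $\binom{k-1}{2}+m(k-1)$ edges, exceeding your bound $\binom{k-1}{2}+m(k-2)$. Moreover, the error is not repaired by simply replacing $a-1$ with $a$: the function $\frac{\binom{a}{2}+ab}{a+b}$ evaluated at $(a,b)=(k,mk)$ equals $\frac{(2m+1)k-1}{2(m+1)}>\lambda$, so the cruder bound cannot prove balancedness. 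What is needed is finer bookkeeping: if $b_1$ of the retained $v$'s have their excluded vertex $u_j$ retained and $b_2=b-b_1$ do not, then $e(H')\le\binom{a}{2}+(a-1)b_1+ab_2$ with the crucial constraints $b_1\le ma$ and $b_2\le m(k-a)$; one must optimize this three-parameter bound (the maximum over the box is attained at corners, and each corner value can be checked to be at most $\lambda$, with $b_2=0$ forced when $a=k$). Without the constraint on $b_2$ the desired inequality is simply not true of your majorant, so this is a missing step rather than a cosmetic slip.

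For comparison, the paper avoids the subgraph case analysis altogether: it observes that $X_{k,m}$ is built from the (balanced) clique $K_k$ by successively attaching vertices of degree $k-1$, notes that the densities $\lambda_n=\frac{k(k-1)/2+n(k-1)}{k+n}$ of the intermediate graphs increase strictly, and then argues that deleting a clique vertex lowers density more than deleting an outside vertex. Your overall strategy (reduce to induced subgraphs and optimize a rational function of the vertex counts) is perfectly viable and, once the two-type count above is inserted, arguably more explicit than the paper's sketch; but as written the key inequality it rests on is false.
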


\begin{proof} Clearly, we have
\[   \v {X_{k,m}} = (m+1) k                      \]
\[   \e {X_{k,m}} = \frac{k(k-1)}{2} + m k (k-1) = \frac{(2m+1)k(k-1)}{2}             \]
Observe that the graph $X_{k,m}$ is obtained from the balanced complete graph on $k$ vertices by successively adding vertices of order $k-1$. The densities of the graphs obtained in this way are given
by the sequence
\[   \lambda_n = \frac{\frac{k(k-1)}{2} + n(k-1)}{k+n}           \]
It is a routine calculation to show that this is a strictly increasing sequence.
One completes the argument by observing that if one removes a vertex of the clique, then the density
of the remaining graph will be smaller than if one removes one of the vertices from outside the clique.

\end{proof}

From  Theorem 4.4.2  in N. Alon and J. Spencer \cite{NAJS}, we know that for a balanced graph $H$ with density $\lambda$,  the sharp threshold for $G(n,p)$ containing $H$ as a subgraph is given by
\[p=n^{ -1/\lambda}\]
That is, if $p=n^{\alpha}$ with $\alpha> \displaystyle -\frac{1}{\lambda}$, then $G(n,p)$ a.a.s. contains $H$
as a subgraph and if $\alpha <\displaystyle -\frac{1}{\lambda}$, then $G(n,p)$ a.a.s. does not contain $H$ as a subgraph.

\begin{prop} If $p=n^{\alpha}$ with
\[   \frac{-2}{k+1} <  \alpha < \frac{-2(m+1)}{(2m+1)(k+1)}             \]
then a.a.s. $\tilde H_k(N_m(G(n,p))) \neq 0 $.
\end{prop}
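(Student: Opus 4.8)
The plan is to combine the two threshold results established earlier: the balanced-subgraph threshold of Alon--Spencer applied to $X_{k,m}$, and the retraction-to-a-sphere result of the Proposition preceding the $X_{k,m}$ density lemma (the one asserting that if a graph $H$ has a maximal clique that cannot be extended to an $X_{k,m}$ subgraph, then $\|N_m(H)\|$ retracts to $\mathbb{S}^{k-2}$). First I would want the homology of $N_m(G(n,p))$ to live in dimension $k$, so I should apply that proposition with $k+2$ in place of $k$ — i.e.\ I want $G(n,p)$ to have a maximal clique of order $k+2$ that does not extend to an $X_{k+2,m}$ subgraph, which yields a retraction onto $\mathbb{S}^{k}$ and hence $\tilde H_k \neq 0$. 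So throughout, the relevant ``excluded'' balanced graph is $X_{k+2,m}$, whose density by the density lemma is $\lambda = \frac{(2m+1)(k+1)}{2(m+1)}$, giving threshold exponent $-1/\lambda = \frac{-2(m+1)}{(2m+1)(k+1)}$. This matches the upper bound $\alpha < \frac{-2(m+1)}{(2m+1)(k+1)}$ in the statement, so for such $\alpha$ the graph $X_{k+2,m}$ is a.a.s.\ \emph{absent} from $G(n,p)$.

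Next I would secure the existence of a maximal clique of order $k+2$. The clique number of $G(n,p)$ for $p=n^\alpha$ concentrates around $2/|\alpha|$ (or, in terms of $\log$ base $1/p = n^{-\alpha}$, around $\frac{2\log n}{-\alpha\log n}$), so for $\alpha$ in a suitable window one gets cliques of order exactly $k+2$; more carefully, one needs that $K_{k+2}$ appears and $K_{k+3}$ does not (or that there is a maximal clique of that size), and the lower constraint $\frac{-2}{k+1} < \alpha$ is exactly what is needed to push the clique number up to at least $k+2$: the threshold for containing $K_{k+2}$ as a subgraph, $K_{k+2}$ being balanced with density $\frac{k+1}{2}$, is $n^{-2/(k+1)}$, so $\alpha > \frac{-2}{k+1}$ forces $K_{k+2}\subseteq G(n,p)$ a.a.s. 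One then argues that a.a.s.\ there is a \emph{maximal} clique of order $k+2$: since $X_{k+2,m}$ (and in particular $K_{k+3}$, which contains $K_{k+2}$ plus one adjacent vertex) is absent, any $K_{k+2}$ already present cannot be extended to a larger clique, so it is maximal, and it cannot be extended to an $X_{k+2,m}$ either — which is precisely the hypothesis needed to invoke the sphere-retraction proposition.

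Assembling: for $\alpha$ in the stated interval, a.a.s.\ $G(n,p)$ contains a copy of $K_{k+2}$, and a.a.s.\ contains no copy of $X_{k+2,m}$; on this a.a.s.\ event the $K_{k+2}$ is a maximal clique that does not extend to an $X_{k+2,m}$ subgraph, so by the earlier proposition $\|N_m(G(n,p))\|$ retracts onto $\mathbb{S}^k$, and a retraction induces a surjection on reduced homology, whence $\tilde H_k(N_m(G(n,p)))\neq 0$. I expect the main obstacle to be the bookkeeping around ``maximal clique of the right size'': one must be careful that the two a.a.s.\ events (existence of $K_{k+2}$, nonexistence of $X_{k+2,m}$) are compatible, i.e.\ that the window $\frac{-2}{k+1} < \alpha < \frac{-2(m+1)}{(2m+1)(k+1)}$ is nonempty (it is, since $\frac{2m+1}{2(m+1)}<1$ forces $\frac{-2(m+1)}{(2m+1)(k+1)} < \frac{-2}{k+1}$ — wait, one checks the inequality direction carefully: $\frac{2(m+1)}{(2m+1)(k+1)} > \frac{2}{(2m+1)(k+1)}$ and needs comparison with $\frac{2}{k+1}=\frac{2(2m+1)}{(2m+1)(k+1)}$, and since $m+1 < 2m+1$ for $m\ge 1$ the interval is indeed nonempty), and that absence of $X_{k+2,m}$ genuinely rules out extending a maximal $K_{k+2}$ in the way the hypothesis of the retraction proposition requires. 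The rest is a direct citation of Alon--Spencer's threshold theorem, the density lemma for $X_{k,m}$, and the sphere-retraction proposition, so no substantial new computation is needed.
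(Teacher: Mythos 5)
Your overall route coincides with the paper's: apply the sphere--retraction proposition with $k+2$ in place of $k$, and use the Alon--Spencer threshold plus the density lemma to get a.a.s.\ presence of $K_{k+2}$ (from $\alpha>-2/(k+1)$, density $(k+1)/2$) and a.a.s.\ absence of $X_{k+2,m}$ (from $\alpha<-2(m+1)/((2m+1)(k+1))$, density $(2m+1)(k+1)/(2(m+1))$). The genuine gap is your maximality step. You claim that the absence of $X_{k+2,m}$ rules out $K_{k+3}$ (``which contains $K_{k+2}$ plus one adjacent vertex''), so that any $K_{k+2}$ present is automatically maximal. That inference is false: for $m\ge 1$ the graph $X_{k+2,m}$ has $(m+1)(k+2)$ vertices, namely $m(k+2)$ vertices outside the clique, each adjacent to all but one clique vertex, so a $K_{k+3}$ (one extra vertex adjacent to the whole clique) contains no copy of $X_{k+2,m}$, and the a.a.s.\ nonexistence of $X_{k+2,m}$ gives no information about $(k+3)$-cliques. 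Worse, the conclusion you want is actually false in part of the stated range: $K_{k+3}$ is balanced with density $(k+2)/2$, hence appears a.a.s.\ once $\alpha>-2/(k+2)$, and $-2/(k+2)<-2(m+1)/((2m+1)(k+1))$ whenever $mk\ge 2$; so for such $k,m$ there are admissible $\alpha$ for which $G(n,p)$ a.a.s.\ does contain $K_{k+3}$, and ``every $K_{k+2}$ is maximal'' cannot hold.

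What the paper does instead is exactly the missing computation: it bounds the probability that a \emph{given} clique of order $k+2$ extends to one of order $k+3$ by $n\,p^{k+2}=n\cdot n^{\alpha(k+2)}\le n^{(-k-2m-3)/((2m+1)(k+1))}={\bf o}(1)$, using the upper bound on $\alpha$. Combined with the a.a.s.\ existence of a $K_{k+2}$ and the a.a.s.\ absence of $X_{k+2,m}$, this yields a maximal $(k+2)$-clique that cannot be extended to an $X_{k+2,m}$, and then the retraction onto $\mathbb S^{k}$ gives $\tilde H_k(N_m(G(n,p)))\neq 0$. The rest of your proposal (the two threshold exponents, the nonemptiness of the interval, and the final homological conclusion) agrees with the paper; you need to replace the ``absence of $X_{k+2,m}$ implies maximality'' step by an argument of this kind.
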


 \begin{proof} Since both $K_{k+2}$ and $X_{k+2,m}$ are  balanced, it follows from the remarks before the proposition that $G(n,p)$ a.a.s. contains cliques of order $k+2$ but no $X_{k,m}$ subgraphs. In order to apply Proposition 5 above, it remains to show that the probability of extending a given clique of order $k+2$ to one of order $k+3$ is {\bf o(1)}. In fact, this probability is bounded from above by
\[   n p^{k+2} = n \cdot n^{\alpha(k+2)} \le n \cdot n^{  \frac{-2(m+1)(k+2)}{(2m+1)(k+1)} }     =
                   n^{  \frac{-k - 2m -3}{(2m+1)(k+1)} } = {\bf o}(1)        \]

\end{proof}

\section{Applications to topological data analysis}

Given a finite metric space $\mathbb X$ (e.g. a point cloud in some Euclidean space), we can
construct a bi-filtered complex as follows. Let $r$ be a positive real number and $m$
a positive integer. The metric space $\mathbb X$ determines a filtered family of graphs $G(\mathbb  X,r)$,
where the point of $\mathbb X$ are the vertices, and there is an edge between two vertices if
the distance between them is less than $r$. This is simply the one skeleton of the Rips complex.
 If $r_1<r_2$, there is a natural map
\[  G(\mathbb X, r_1) \to G(\mathbb X, r_2)  \]
For every positive integer $m$ we can consider the multineighbor complex $N_m(G(\mathbb  X,r))$, which for fixed $r$ is
a decreasing filtration as a function of $m$.

There are now three types of persistence one can consider:
\begin{itemize}
\item with respect to $r$ keeping $m$ fixed;
\item with respect to $m$ keeping $r$ fixed:
\item with respect to both variables using multidimensional persistence of G. Carlsson and A. Zomorodian \cite{CZ2}.
\end{itemize}

Bellow we explore the first possibility above. The procedure described below is the adaptation to our setting of
the Giotto TDA tutorial {\it Topological feature extraction using Vietoris-Rips Persistence and Persistence Entropy}, \cite{Giotto}.

The main difference is that  we use the multineighbor complex rather than the Vietoris-Rips complex in order to compute
the persistent homology of the point cloud. For the very noisy examples we consider we obtain much better predictions
with the Random Forest classifier when using the multineighbor complex than when using the Vietoris-Rips complex.
To obtain calculate the relevant persistent homology of the multineighbor complex we use the simplex stream of the Gudhi TDA library \cite{Gudhi}.

Consider the following six subsets of $\mathbb R^3$: a circle, a sphere, a wedge of two spheres of different radii, a torus, and two types of pretzel.

\begin{figure}[h!]
    \centering
    \begin{subfigure}{.32\textwidth}
        \includegraphics[width=1.3\linewidth]{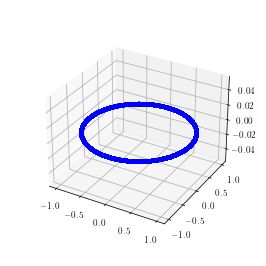}
        \caption{A circle}
    \end{subfigure}
    \begin{subfigure}{.32\textwidth}
        \includegraphics[width=1.3\linewidth]{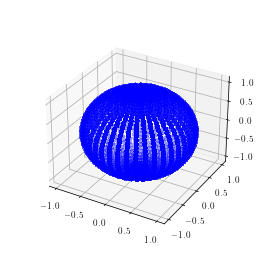}
        \caption{A sphere}
    \end{subfigure}
    \begin{subfigure}{.32\textwidth}
        \includegraphics[width=1.3\linewidth]{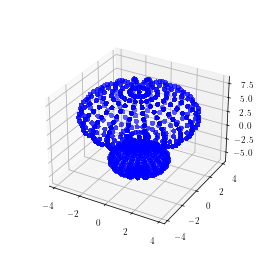}
        \caption{A double sphere}
    \end{subfigure}
    \begin{subfigure}{.32\textwidth}
        \includegraphics[width=1.3\linewidth]{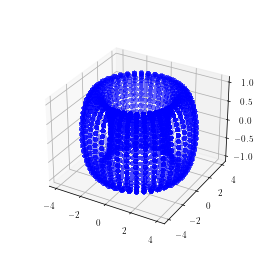}
        \caption{A torus}
    \end{subfigure}
    \begin{subfigure}{.32\textwidth}
        \includegraphics[width=1.3\linewidth]{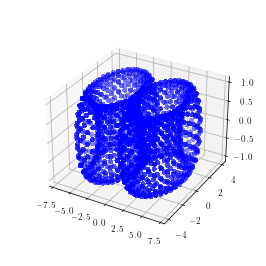}
        \caption{A pretzel (first type)}
    \end{subfigure}
    \begin{subfigure}{.32\textwidth}
        \includegraphics[width=1.3\linewidth]{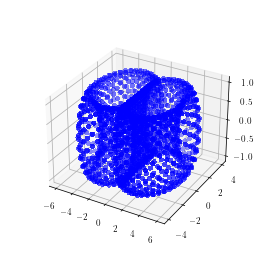}
        \caption{A pretzel (second type)}
    \end{subfigure}
    \caption{The point clouds without noise}
\end{figure}

\begin{figure}[h!]
    \centering
    \begin{subfigure}{.32\textwidth}
        \includegraphics[width=1.3\linewidth]{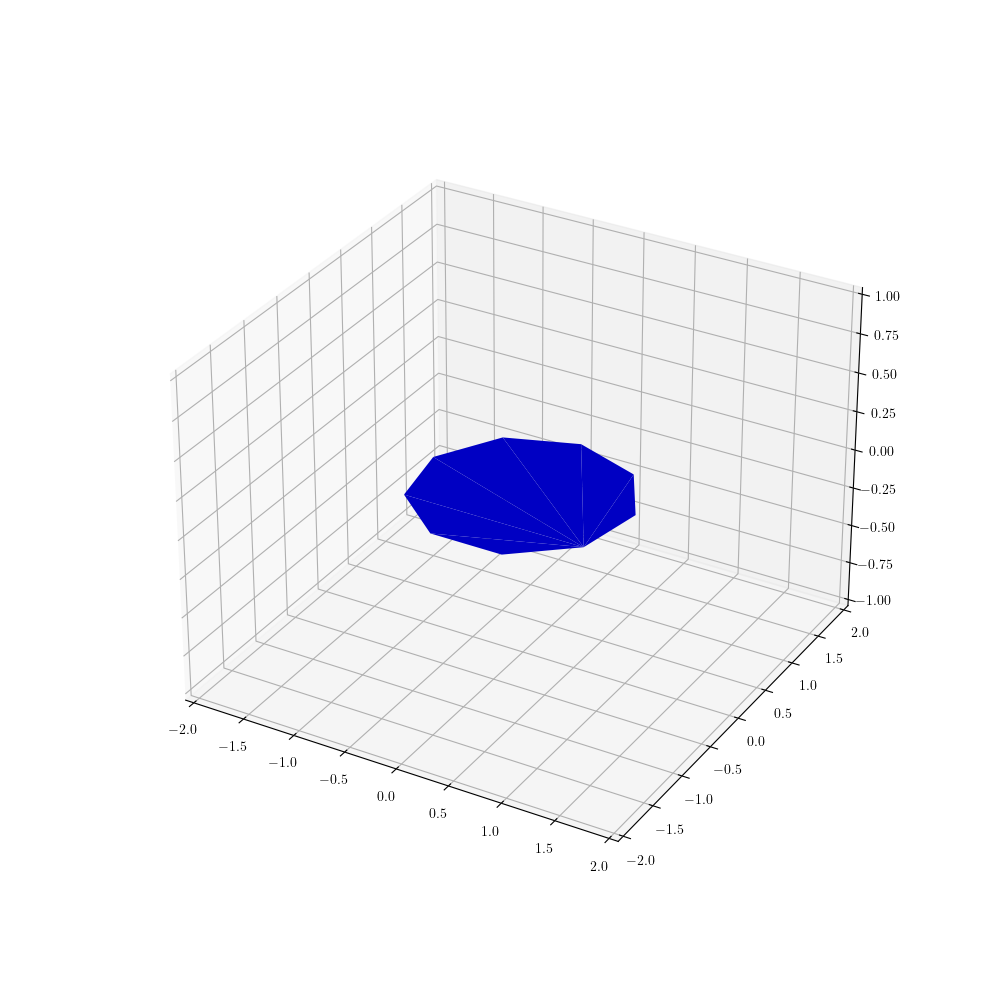}
        \caption{}
    \end{subfigure}
    \begin{subfigure}{.32\textwidth}
        \includegraphics[width=1.3\linewidth]{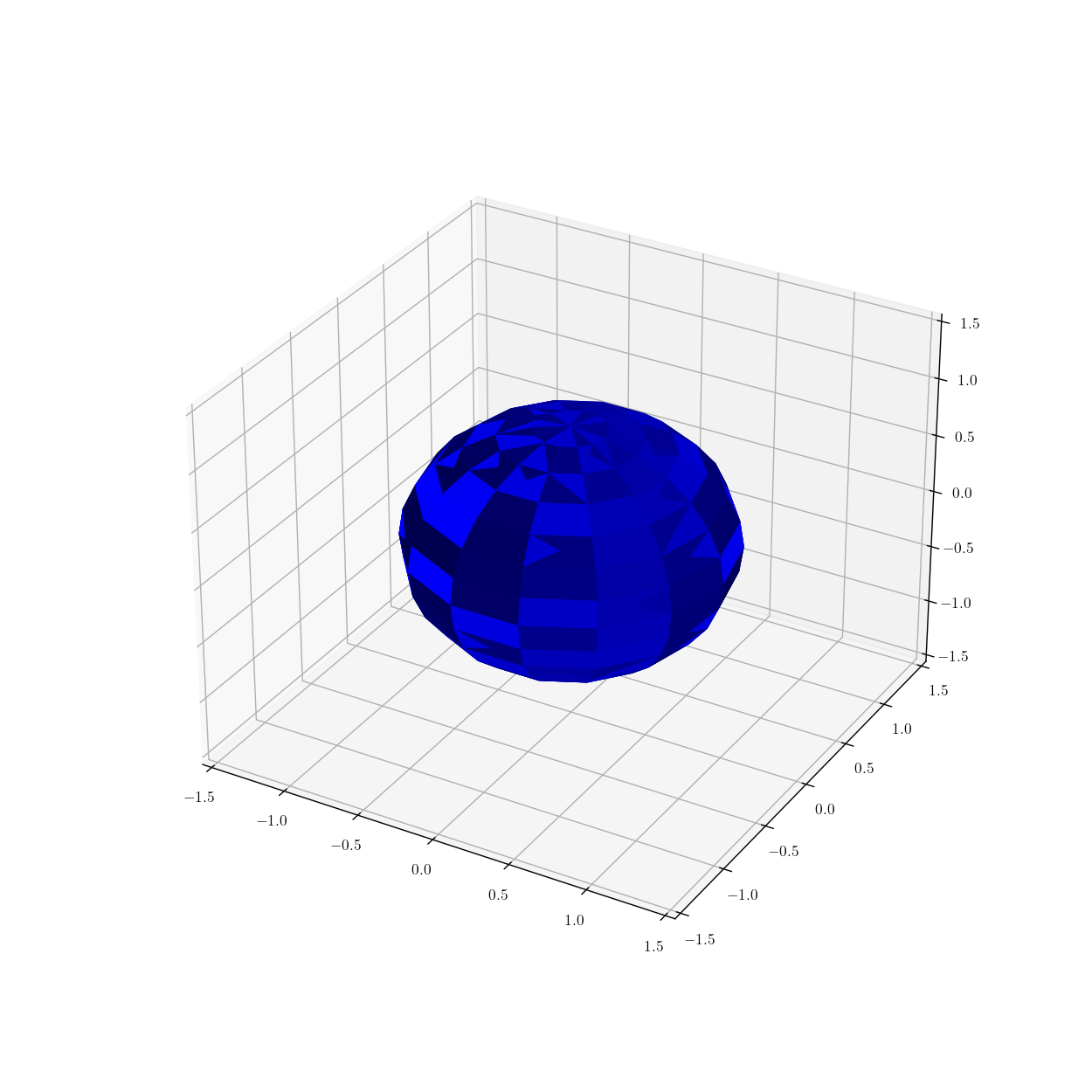}
        \caption{}
    \end{subfigure}
    \begin{subfigure}{.32\textwidth}
        \includegraphics[width=1.3\linewidth]{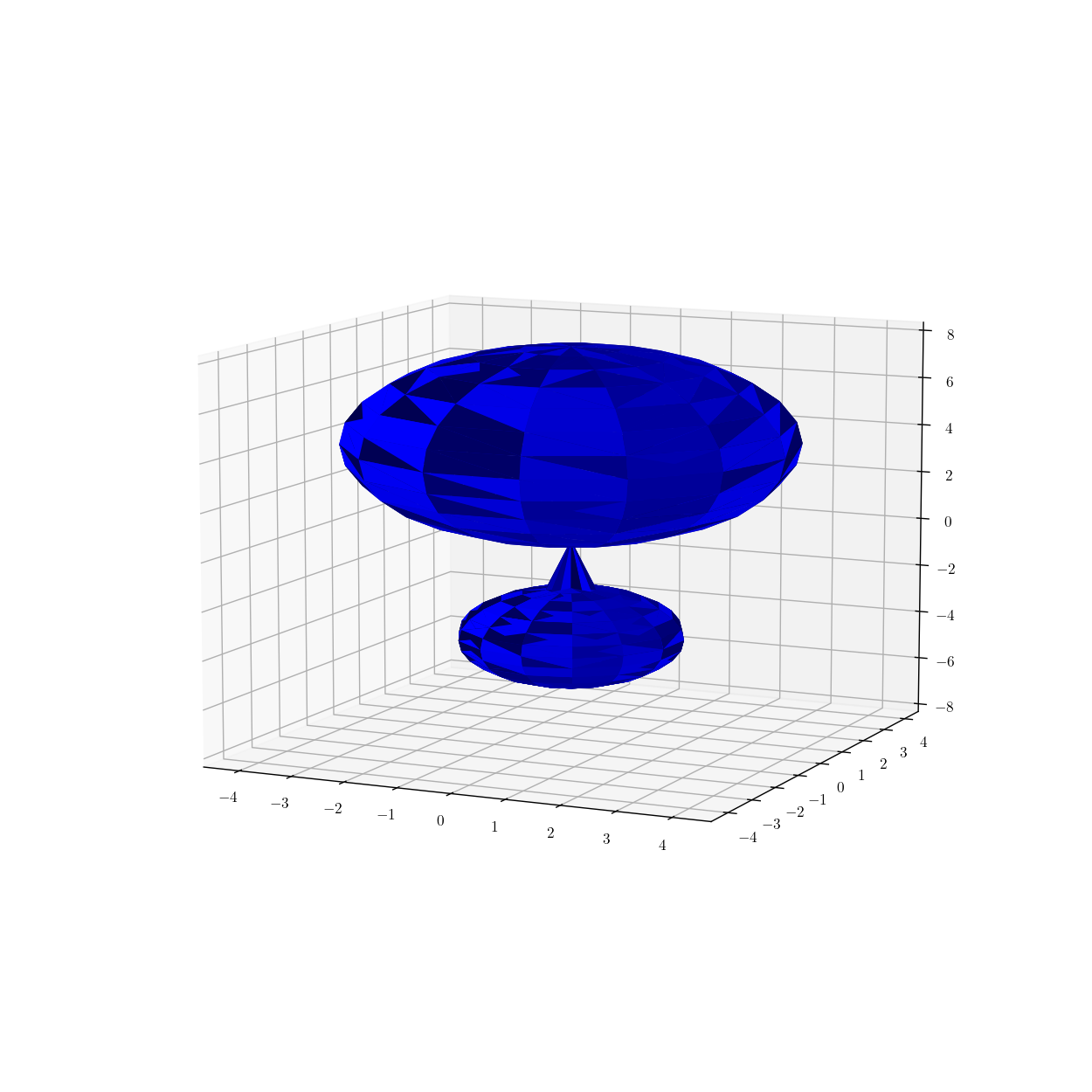}
        \caption{}
    \end{subfigure}
    \begin{subfigure}{.32\textwidth}
        \includegraphics[width=1.3\linewidth]{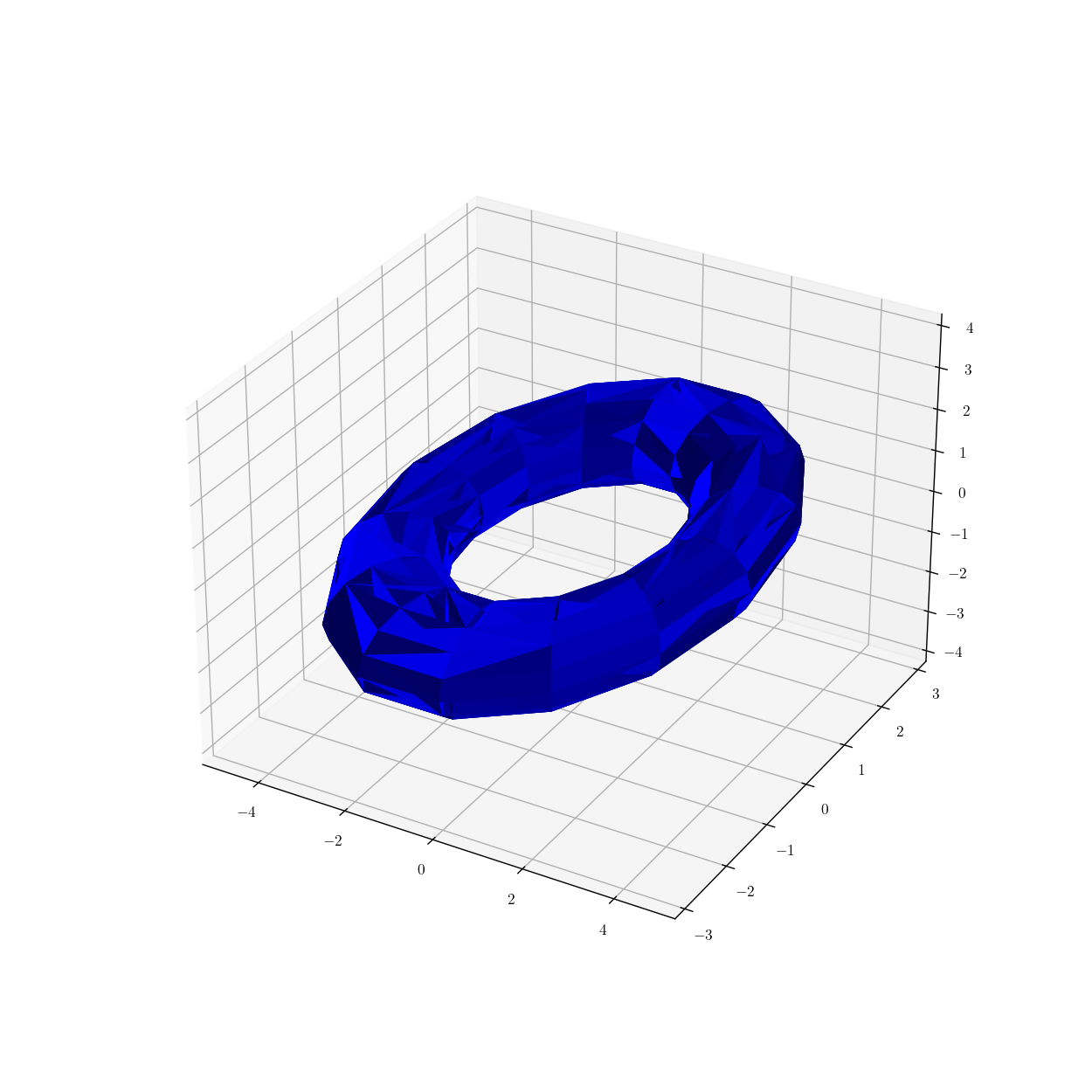}
        \caption{}
    \end{subfigure}
    \begin{subfigure}{.32\textwidth}
        \includegraphics[width=1.3\linewidth]{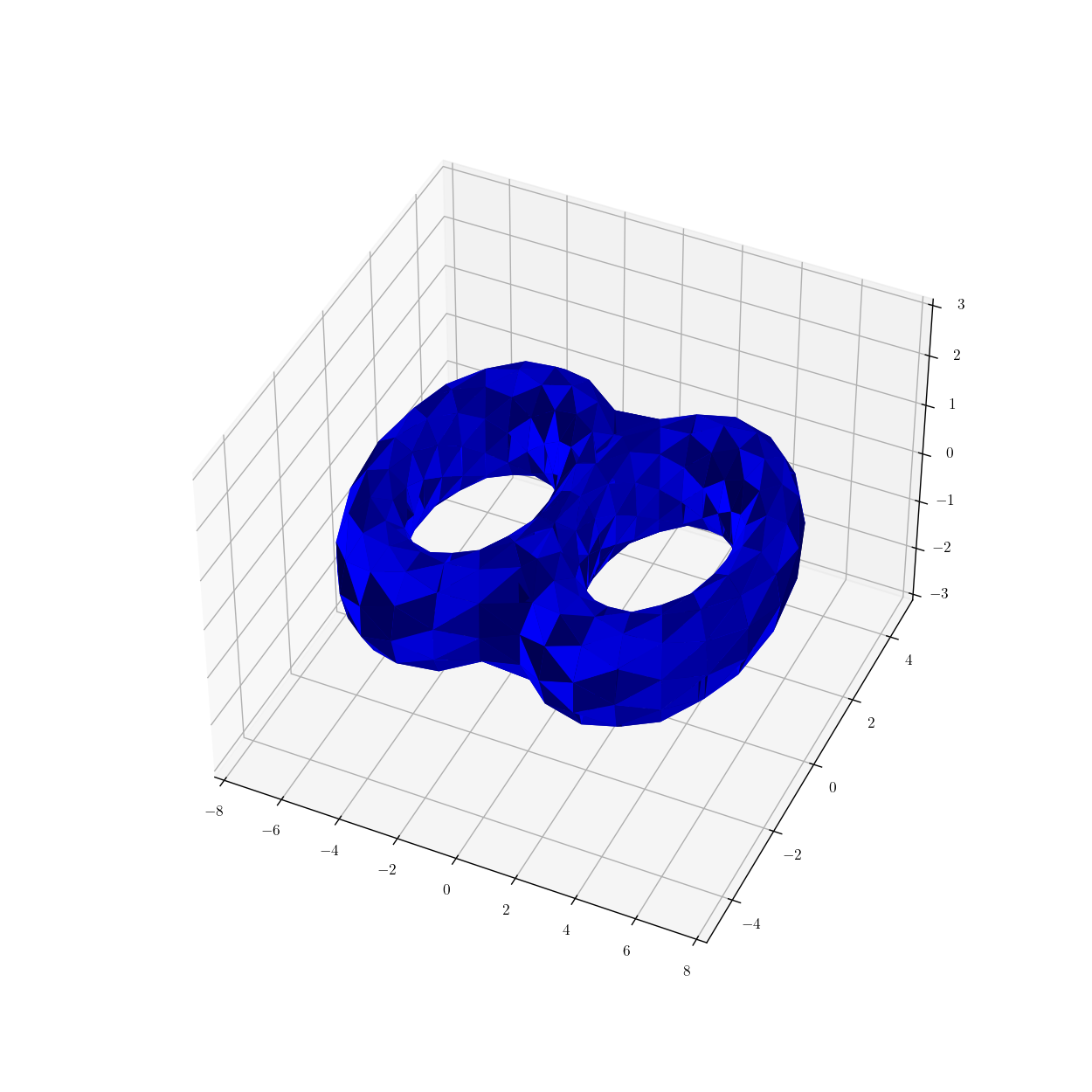}
        \caption{}
    \end{subfigure}
    \begin{subfigure}{.32\textwidth}
        \includegraphics[width=1.3\linewidth]{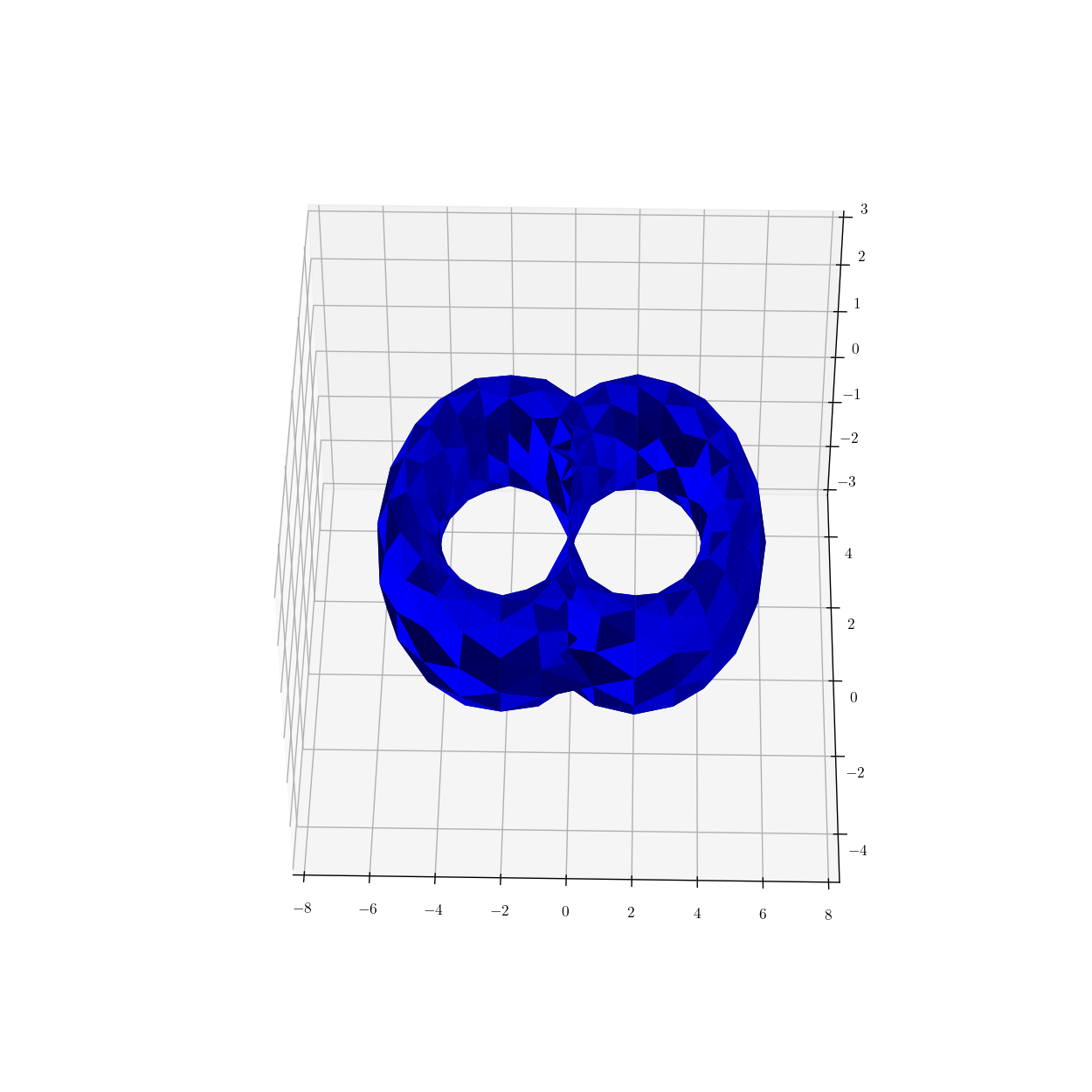}
        \caption{}
    \end{subfigure}
    \caption{Alpha Complex respresentations of the point clouds without noise}
\end{figure}

Except for the circle, each one is the image of a rectangular array
of points in $\mathbb R^2$ under a continuous map. The plots of these
subsets of $\mathbb R^3$ are displayed in Figure 6.

Now we consider a collection of 8 noisy samplings of each of these shapes (with noise of magnitude $1$
added independently to each coordinate; the diameter of each point cloud does not exceed 6). The case of the circle
is displayed in Figure 7.

\begin{figure}[h!]
\centering
\includegraphics[width=1.1\linewidth]{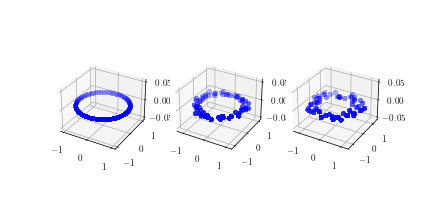}
\caption{Discrete samplings of the circle with various amounts of noise}
\end{figure}

In this way we obtain a collection of 48 point clouds. With each point cloud we associate a filtered
graph as follows: the vertices are the points of the point cloud. The edges are added as in the Rips complex
construction. Next, for values of m in the set $\{1,2,3,4,5 \}$ we construct the (filtered) multineighbor
complex and compute its persistent homology (using the Gudhi TDA package).  A discrete sampling of a torus with noise
and its persistence diagram is displayed in Figure 8.

\begin{figure}[h!]
\centering
\includegraphics[width=1\linewidth]{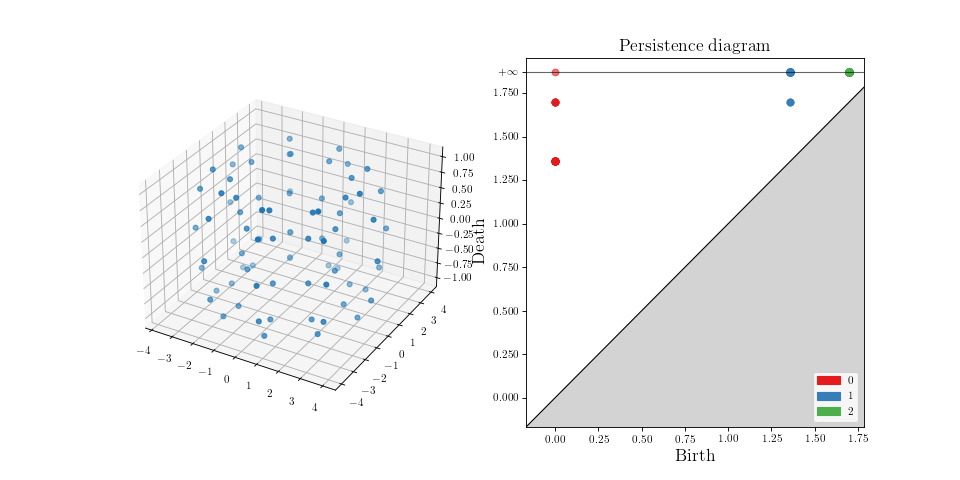}
\caption{A sampling of a torus with noise and its persistence diagram}
\end{figure}

In each homology dimension, we compute the entropy of the persistence diagram. This is defined as follows. Suppose that the
off-diagonal points in the persistence diagram of a given homology dimension form the set:
\[   \{ (b_i,d_i) \mid i=1, \dots,n  \} \]
i.e. there are $n$ persistent homology classes with birth times $b_i$ and death times $d_i$,  and we only consider those for which $b_i< d_i$.
With each homology class with associate the number
\[   p_i = \frac{d_i - b_i}{\sum_{i=1}^n (d_i - b_i)}   \]
These numbers are positive and add up to 1. We define the entropy by the familiar formula from probability and statistics:
\[     E = - \sum_{i=1}^n p_i \ln  p_i      \]

If a point cloud is altered in such a way that a single persistence class will move towards and merge with the diagonal, the entropy will
change in a continuous fashion (as follows from $\lim_{x\to 0+} x\ln x = 0$).

The vector of entropies in the homology dimensions $0$, $1$ and $2$
is used as the classifier of the point cloud. These are entered into the random forest classifier. The average classification
accuracy of twenty repetitions is entered into the table below. The first column gives the number of points in the point clouds considered.
The next five give the number of neighbors in the multineighbor complex considered (from 1 to 5). The final two columns show the results for the Vietoris--Rips complex
and the Alpha Complex, respectively.

\bigskip
\bigskip

\begin{tabular}{|c |c |c| c| c| c| c| c|c|}
\hline
Number of     & m=1 &  m=2  &  m=3 &   m=4 &  m=5 &  Vietoris-- &    Alpha      \\
points           &         &           &           &           &          &   Rips     &    complex \\
\hline
&  &  &  &  &  & &\\
36 & 0.87 & 0.91 & 0.88 & 0.93 & 0.89 & 0.69 & 0.74 \\
&  &  &  &  &  & &\\
\hline
&  &  &  &  &  & &\\
64 & 0.87 & 0.89 & 0.90 & 0.88 & 0.93 & 0.52 & 0.75\\
&  &  &  &  &  & &\\
\hline
\end{tabular}

\section{Conclusions}

We have established that the multineighbor complex of a (random) graph is a useful extension of the
neighborhood complex. It has a number of homological properties which are similar to the single-neighbor
case, however some of these properties  require substantially different proofs (especially the properties described in Theorem \ref{main_theorem} and Proposition \ref{main_prop}).
In Section 6 we apply the multineighbor complex to the classification of noisy point clouds in $3$-dimensional Euclidean space.
We conclude that when using topological entropy of a complex associated to our point clouds as a classifier, the multineighbor
complex has superior discriminating power as compared to either the neighborhood complex, the Vietoris--Rips complex or the Alpha Complex.


\end{document}